\newtheorem{thm}{Theorem}[section]
\newtheorem{cor}[thm]{Corollary}
\newtheorem{lem}[thm]{Lemma}
\newtheorem{conj}[thm]{Conjecture}
\theoremstyle{definition}
\theoremstyle{remark}
\title{$SO(3)$-Berezin-Toeplitz Quantization And the AJ Conjecture}
\author{HONGHUAI FANG}
\email{dunjieshe@gmail.com}
\pgfplotsset{compat=1.17}
\begin{document}

\begin{abstract}
We investigate the Berezin-Toeplitz operators that operate on the geometric quantized space corresponding to the $SO(3)$-Witten-Chern-Simons theory. We conjecture that the $SO(3)$-Berezin-Toeplitz operators quantized from the A-polynomial annihilate the corresponding $SO(3)$-knot states.  
\end{abstract}

\maketitle

\section{Introduction}
In \cite{Fang2022SO3KnotSA} we investigate the geometric quantized space corresponding to the $SO(3)$-Witten-Chern-Simons theory of the torus, denoted by $\mathcal{H}^{alt}_{r+\frac{1}{2}}(j, \delta)$, which is the alternating subspace of holomorphic sections of $L^r\otimes L^{\frac{1}{2}}\otimes\delta$ over the $SU(2)$-character variety of the torus. The review is provided in Sect.2. Additionally, we put forth a conjecture that can be considered a geometric version of the volume conjecture.

\begin{conj}\label{vc}
    For any knot $K$ in $S^3$, let $Z'_r(S^3\backslash K)\in \mathcal{H}^{alt}_{r+\frac{1}{2}}(j, \delta)$ be its $r$-th $SO(3)$-knot state and let $\operatorname{vol}\left(S^{3} \backslash K\right)$ be the simplicial volume of its complement. Then we have
    \begin{equation}
        \lim _{r \rightarrow+\infty} \frac{ \pi}{r} \log ||Z'_r(S^3\backslash K)||^{2}_{r+\frac{1}{2}}=\operatorname{vol}\left(S^{3} \backslash K\right).
    \end{equation}
    
\end{conj}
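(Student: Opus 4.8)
The plan is to adapt the knot-state asymptotics program of Charles--Marché to the $SO(3)$ setting of \cite{Fang2022SO3KnotSA}. First I would produce a tractable formula for $||Z'_r(S^3\backslash K)||^2_{r+\frac{1}{2}}$, rewriting the Hermitian norm in $\mathcal{H}^{alt}_{r+\frac{1}{2}}(j,\delta)$ as an oscillatory sum over the admissible colorings: up to explicit $SO(3)$ quantum-dimension weights this is a sum of values of the $SO(3)$ colored Jones polynomials of $K$ at odd roots of unity. The crucial structural point is that, after the substitution turning the colors into an angular variable $\theta$ ranging over a fundamental domain of the character variety of the torus, each summand acquires the semiclassical form $e^{\frac{r}{\pi}\Phi(\theta)+o(r)}$, where $\operatorname{Re}\Phi$ is the volume functional of the associated $\mathrm{PSL}_2(\mathbb{C})$-representation and $\operatorname{Im}\Phi$ the Chern--Simons term; thus the exponential growth rate of the sum is governed by the critical points of $\Phi$.

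Second, I would establish a concentration statement. By the $SO(3)$ Berezin--Toeplitz machinery of this paper, the operators quantizing the defining polynomial of the $A$-polynomial curve should annihilate $Z'_r$ up to $O(r^{-\infty})$ --- this is precisely the AJ-type assertion announced in the abstract --- so the microsupport of the sequence $(Z'_r)$ lies on the image of the character variety of $S^3\backslash K$ inside that of the torus, i.e. on the $A$-polynomial curve. This localizes the oscillatory sum to neighborhoods of the finitely many critical points of $\Phi$ on the curve, and a stationary-phase expansion there yields
\begin{equation}
    \lim_{r\to+\infty}\frac{\pi}{r}\log||Z'_r(S^3\backslash K)||^2_{r+\frac{1}{2}}=\max_{\rho}\operatorname{Vol}(\rho),
\end{equation}
the maximum running over the critical representations $\rho$ of $\pi_1(S^3\backslash K)$ and $\operatorname{Vol}(\rho)$ denoting the volume (imaginary part of the complexified Chern--Simons invariant) of $\rho$.

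Third, I would identify the maximizer. For a hyperbolic knot the critical points of $\Phi$ on the $A$-polynomial curve are exactly the characters satisfying Neumann--Zagier-type gluing relations; by Mostow rigidity together with the known maximality of the complete structure among all representations, the discrete faithful representation $\rho_{\mathrm{geo}}$ is the unique global maximizer, with $\operatorname{Vol}(\rho_{\mathrm{geo}})=\operatorname{vol}(S^3\backslash K)$, and the latter equals the simplicial volume by Gromov--Thurston. For non-hyperbolic $K$ one decomposes the complement along its JSJ tori: Seifert-fibered pieces (in particular all torus knots) contribute only polynomially growing terms, so $\frac{\pi}{r}\log||Z'_r||^2\to 0$, matching the vanishing of their simplicial volume, and the mixed case follows from additivity of both sides over the JSJ decomposition and over connected sums.

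The genuinely hard step, shared with every version of the volume conjecture, is the analytic control required to upgrade the saddle-point heuristic to an equality: one must prove a matching \emph{upper} bound, which demands (i) uniform estimates on the $SO(3)$ $6j$-symbols and quantum dimensions near the boundary of the admissible region, where the summand ceases to be of pure exponential type, and (ii) the exclusion of unexpected constructive interference away from the critical set. While the concentration statement and the lower bound appear to be within reach of the Toeplitz estimates developed here, carrying out (i) and (ii) rigorously is the essential obstacle, and is the reason the statement is recorded at this stage only as a conjecture.
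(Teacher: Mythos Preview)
The statement you are addressing is Conjecture~1.1 in the paper; it is \emph{not} proved there, and there is no proof to compare your proposal against. The paper offers only a programmatic strategy, sketched in the introduction and in Section~5: assuming the geometric AJ statement (Conjecture~4.4, that $T_{r+\frac{1}{2}}(A)Z'_r=0$), the author proposes to define a ``quantum Mahler measure'' $\hat m$ on the space of $SO(3)$-Toeplitz operators, show that its semiclassical limit recovers the classical Mahler measure $m(A_K)$, and then link $m(A_K)$ to $\operatorname{vol}(S^3\backslash K)$ via $L$-functions and geometric Langlands. None of these steps is carried out in the paper.

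Your route is genuinely different. You follow the Charles--March\'e microlocal program: rewrite $\|Z'_r\|^2$ as an oscillatory sum over colors, use the Toeplitz annihilation to localize the microsupport of $(Z'_r)$ to the $A$-polynomial curve, and run stationary phase with the Neumann--Zagier potential as phase function. This is much closer to the established asymptotic analysis of colored Jones invariants than to the paper's Mahler-measure/Langlands proposal, and it has the virtue of making the analytic obstacles explicit --- precisely the uniform upper bounds and non-interference statements you isolate in your points (i) and (ii). The paper's strategy, by contrast, trades those analytic difficulties for algebraic and number-theoretic ones (defining $\hat m$, controlling its semiclassical limit, and the still-conjectural identity between $m(A_K)$ and the hyperbolic volume). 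Both strategies take the unproven Conjecture~4.4 as input, and neither is anywhere near a complete argument; your candid identification of (i) and (ii) as the essential obstruction is accurate and is exactly why the paper, like you, records the statement only as a conjecture.
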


 Drawing inspiration from holographic information theory, we regard the left hand side of equation \hyperref[vc]{(1)} as a form of quantum complexity that is defined on the conformal quantum space $\mathcal{H}^{alt}_{r+\frac{1}{2}}(j, \delta)$. To investigate quantum complexity, it is necessary to delve into the deformation quantization of the $SO(3)$-character variety of the torus. In this article, we focus on the study of Berezin-Toeplitz operators that operate on the geometric quantized space $\mathcal{H}^{alt}_{r+\frac{1}{2}}(j, \delta)$. Specifically, in Sec.3, we employ the Schwartz kernel of the Szego projector to demonstrate that the space of $SO(3)$-curve operators is encompassed by the space of $SO(3)$-Berezin-Toeplitz operators.

The AJ conjecture\cite{garoufalidis2004characteristic}, which establishes a connection between the A-polynomial and the colored Jones polynomial, suggests that the quantum complexity of the $SO(3)$-Berezin-Toeplitz operator quantized from the A-polynomial deserves our attention. In Sect.4, we prove the existence of a $\ast$-algebraic isomorphism between the space of $SO(3)$-Berezin-Toeplitz curve operators and the space of $SO(3)$-skein theoretical curve operators. We utilize this isomorphism to conjecture that the $SO(3)$-Berezin-Toeplitz operators quantized from the A-polynomial annihilate the corresponding $SO(3)$-knot states.

Our strategy for attacking the conjecture\hyperref[vc]{(1.1)} as follows. Let $T_{r+\frac{1}{2}}(A)$ denote the $SO(3)$-Berezin-Toeplitz operators quantized from the A-polynomial, and note that $T_{r+\frac{1}{2}}(A)Z'_r(S^3\backslash K)=0$. Our goal is to connect the quantum complexity of the operator $T_{r+\frac{1}{2}}(A)$ with the $L^2$-norm of $Z'_r(S^3\backslash K)$. There are several approaches to doing this, such as using the original definition of quantum complexity via the $\ast$-algebraic isomorphism, or studying the complexity metric on the space of $SO(3)$-Berezin-Toeplitz operators. However, we propose a novel method based on the volume conjecture and the Mahler measure of A-polynomials\cite{boyd2002mahler}. Specifically, we seek to quantize the Mahler measure and examine the semiclassical limit of the quantum Mahler measure of $T_{r+\frac{1}{2}}(A)$. Finally, we aim to establish a connection between the Mahler measure and the hyperbolic volume of the knot complement via the geometric Langlands theory. Refer to Sect.5 for further details.

\section{Review of $SO(3)$-Geometric Quantization}
In this section we review the $SO(3)$-TQFT following the geometric quantized approach\cite{Fang2022SO3KnotSA}. Let $(V,\omega)$ be a real 2-dimensional symplectic vector space equipped with a compatible linear complex structure $j$. Let $\alpha\in \Omega^1(V,\mathbb{C})$ be given by $\alpha_x(y)=\frac{1}{2}\omega(x,y)$ and endow the trivial line bundle $L=V\times \mathbb{C}$ with connection $\nabla=d-i\alpha$, the standard hermitian structure $h$, and the unique holomorphic structure, making it a prequantum line bundle. Let $\Lambda\subset V$ be a lattice of $V$ such that the symplectic volume of the fundamental domain equal to 4$\pi$. Let $L^{\frac{1}{2}}$ be the half line bundle of $L$ endowed with the standard hermitian structure $h_{\frac{1}{2}}$. Meanwhile for the canonical line bundle $K_j=\left\{\alpha \in \Omega^1(V,\mathbb{C})|\alpha(j \cdot)=i \alpha\right\}$ over $V$ we choose a half form $\delta$ of $K_j$ with an isomorphism $\varphi: \delta^{\otimes 2} \rightarrow K_j$. $K_j$ has a natural scalar product such that the square of the norm of $\alpha$ is $i \alpha \wedge \frac{\bar{\alpha} }{\omega}$. We endow $\delta$ with the hermitian structure $h_{\delta}$ making $\varphi$ an isometry.

Choose a basis $\{\mu,\lambda\}$ of $\Lambda$ such that $\omega(\mu,\lambda)=4\pi$. Let $\tau=a+bi$ be a complex number such that $\lambda=a\mu+bj\mu$. Let $p,q:V\rightarrow\mathbb{R}$ be the linear coordinate dual to $\mu,\lambda$. Thus $z=p+\tau q$ is a holomorphic coordinate of $(V,j)$. In this coordinate $\omega=4\pi dp\wedge dq$. For any $x\in \Lambda$ and any $\Phi\in\Gamma(V,L^{r}\otimes L^{\frac{1}{2}}\otimes\delta),$ define $T^{*}_x\in End(\Gamma(V,L^{r}\otimes L^{\frac{1}{2}}\otimes\delta))$ by
\begin{equation}\label{T}
   (T^{*}_x \Phi)(y)=exp\left(\frac{ (2r+1)}{4} i\omega(x, y)\right)\Phi(x+y).
\end{equation}

Denote by $\mathcal{H}_{r+\frac{1}{2}}(j,\delta)$ the $\Lambda$-invariant subspace of $H^0(V,L^r\otimes L^{\frac{1}{2}}\otimes\delta)$ and endow it with the inner product by 
$$
\left\langle\Psi_1, \Psi_2\right\rangle'_{r+\frac{1}{2}}=\int_D \left(h^{\otimes r}\otimes h_{\frac{1}{2}}\otimes h_{\delta}\right)_z\left(\Psi_1(z), \Psi_2(z)\right)\omega.
$$

There exists an orthonormal
basis $\left(\Psi_{l}\right)_{l \in \mathbb{Z} /(2r+1) \mathbb{Z}}$ of $\mathcal{H}_{r+\frac{1}{2}}(j, \delta)$ such that 
$$T^{*}_{\frac{\mu}{2r+1}}\Psi_l=exp(\frac{2l\pi i}{2r+1})\Psi_l, T^{*}_{\frac{\lambda}{2r+1}}\Psi_l=\Psi_{l+1}.$$
Specificly, $$
\Psi_0=(\frac{2r+1}{4\pi})^{\frac{1}{4}}g_0t^{r+\frac{1}{2}}\Omega_{\mu}
,\Psi_l=(T^{*}_{\frac{\lambda}{2r+1}})^l\Psi_0,$$ 
where $g_0(z)=\sum_{m\in\mathbb{Z}}exp\left(m\pi i\left((4r+2)z+(2r+1)m\tau\right)\right)$, $t(p,q)=exp(2\pi i q(p+\tau q))$, and $\Omega_{\mu}$ be a vector of $\delta$ such that $\varphi(\Omega_{\mu}^2)(\mu)=1$.

The $SU(2)$-character variety of torus $Hom(\pi_1(T^2),SU(2))/conj$ can be view as $V/(\Lambda\rtimes \mathbb{Z}_2)$. Thus we focus on the space
$$
\mathcal{H}^{alt}_{r+\frac{1}{2}}(j, \delta)\triangleq\{\Psi\in\mathcal{H}_{r+\frac{1}{2}}(j, \delta)|\Psi(z)=-\Psi(-z)\}.
$$
Therefore,  
$$\left(\Phi_l=\frac{1}{\sqrt{2}}\left(\Psi_l-\Psi_{-l}\right)\right)_{l=1,\dots,r}
$$
forms an orthonormal basis of $\mathcal{H}^{alt}_{r+\frac{1}{2}}(j, \delta)$.

Fix a knot $K$ in $S^3$ and let $\Sigma$ be the boundary of $S^3\backslash K$. Choosing an oriented diffeomorphism $\psi:\Sigma\rightarrow S^1\times S^1$ and let $\mu$ and $\lambda$ be the homology classes of $\psi^{-1}\left(S^1 \times\right.$ $\{1\})$ and $\psi^{-1}\left(\{1\} \times S^1\right)$ respectively. On the topological side, we have a hermitian space $V_r'(\Sigma)$  with an orthonormal basis $(e_n)_{n=0,\dots,r-1}$, which is the $SO(3)$-topological quantum space defined in \cite{BHMV95} through skein theory. On the geometric side, the variety $Hom(\pi_1(\Sigma),SU(2))/conj$ can be viewed as $H_1(\Sigma, \mathbb{R})/(H_1(\Sigma, \mathbb{Z})\rtimes\mathbb{Z}_2)$, where we endow $H_1(\Sigma, \mathbb{R})$ with a symplectic form $\omega(x,y)=4\pi x\cdot y$, a linear complex sturcture $j$
and a metaplectic form $\delta$. Choose $\{\mu,\lambda\}$ as a basis of $H_1(\Sigma, \mathbb{Z})$ we have a hermitian space $\mathcal{H}^{alt}_{r+\frac{1}{2}}(j, \delta)$ with an orthonormal basis $\left(\Phi_l\right)_{l=1,\dots,r}.$ For any oriented curve $\gamma\subset\Sigma$ we define an endomorphim of $\mathcal{H}^{alt}_{r+\frac{1}{2}}(j, \delta)$ by
$$T_{r+\frac{1}{2}}(\gamma)=-\left(T_{\frac{\gamma}{2r+1}}^*+T_{-\frac{\gamma}{2r+1}}^*\right).$$

In \cite{Fang2022SO3KnotSA} we prove that for any oriented curve $\gamma\subset\Sigma$, there is an isomorphism $I_r':V_r'(\Sigma)\rightarrow\mathcal{H}^{alt}_{r+\frac{1}{2}}(j, \delta)$ such that the following diagram commutes:
\begin{center}

\tikzset{every picture/.style={line width=0.75pt}} 

\begin{tikzpicture}[x=0.75pt,y=0.75pt,yscale=-1,xscale=1]

\draw    (509,42.96) -- (587.8,41.99) ;
\draw [shift={(589.8,41.96)}, rotate = 179.29] [color={rgb, 255:red, 0; green, 0; blue, 0 }  ][line width=0.75]    (10.93,-3.29) .. controls (6.95,-1.4) and (3.31,-0.3) .. (0,0) .. controls (3.31,0.3) and (6.95,1.4) .. (10.93,3.29)   ;
\draw    (482,58.96) -- (481.81,106.96) ;
\draw [shift={(481.8,108.96)}, rotate = 270.23] [color={rgb, 255:red, 0; green, 0; blue, 0 }  ][line width=0.75]    (10.93,-3.29) .. controls (6.95,-1.4) and (3.31,-0.3) .. (0,0) .. controls (3.31,0.3) and (6.95,1.4) .. (10.93,3.29)   ;
\draw    (636,56.96) -- (636.77,106.96) ;
\draw [shift={(636.8,108.96)}, rotate = 269.12] [color={rgb, 255:red, 0; green, 0; blue, 0 }  ][line width=0.75]    (10.93,-3.29) .. controls (6.95,-1.4) and (3.31,-0.3) .. (0,0) .. controls (3.31,0.3) and (6.95,1.4) .. (10.93,3.29)   ;
\draw    (515,118.96) -- (593.8,117.99) ;
\draw [shift={(595.8,117.96)}, rotate = 179.29] [color={rgb, 255:red, 0; green, 0; blue, 0 }  ][line width=0.75]    (10.93,-3.29) .. controls (6.95,-1.4) and (3.31,-0.3) .. (0,0) .. controls (3.31,0.3) and (6.95,1.4) .. (10.93,3.29)   ;

\draw (457,33.36) node [anchor=north west][inner sep=0.75pt]    {$V_{r} '( \Sigma )$};
\draw (595,29.36) node [anchor=north west][inner sep=0.75pt]    {$\mathcal{H}_{r+\frac{1}{2}}^{alt} (j,\delta )$};
\draw (537,20.36) node [anchor=north west][inner sep=0.75pt]    {$I_{r} '$};
\draw (539,95.36) node [anchor=north west][inner sep=0.75pt]    {$I_{r} '$};
\draw (599,106.36) node [anchor=north west][inner sep=0.75pt]    {$\mathcal{H}_{r+\frac{1}{2}}^{alt} (j,\delta )$};
\draw (458,109.36) node [anchor=north west][inner sep=0.75pt]    {$V_{r} '( \Sigma )$};
\draw (435,70.36) node [anchor=north west][inner sep=0.75pt]    {$Z_{r} '( \gamma )$};
\draw (643,70.36) node [anchor=north west][inner sep=0.75pt]    {$T_{r+\frac{1}{2}}( \gamma )$};
\end{tikzpicture}    
\end{center}
where $Z_{r} '( \gamma )$ is in the Kauffman skein module $K_A(T^2\times I)$ with $A=exp(\frac{2\pi i}{4r+2})$. 

\section{$SO(3)$-Berezin-Toeplitz Quantization}
In this section we show that the curve operator $\left(T_{r+\frac{1}{2}}(\gamma)\right)_{r=1,2,\dots}$ is a Berezin-Toeplitz operator. Let $L^2(V/\Lambda,L^{r}\otimes L^{\frac{1}{2}}\otimes\delta))$ be the completion of $\Gamma(V/\Lambda,L^{r}\otimes L^{\frac{1}{2}}\otimes\delta))$  with respect to the inner product $\left\langle\cdot, \cdot\right\rangle'_{r+\frac{1}{2}}$. Let $\Pi_{r+\frac{1}{2}}$ be the orthogonal projector from $L^2(V/\Lambda,L^{r}\otimes L^{\frac{1}{2}}\otimes\delta))$ to $\mathcal{H}_{r+\frac{1}{2}}(j,\delta)$. For $f\in \mathcal{C}^{\infty}(V/\Lambda)$, the \textit{Berezin-Toeplitz operator associated with $f$} is a family of operator 
$$
\left(T_{r+\frac{1}{2}}(f)=\Pi_{r+\frac{1}{2}} M_f:\mathcal{H}_{r+\frac{1}{2}}(j,\delta)\rightarrow\mathcal{H}_{r+\frac{1}{2}}(j,\delta)\right)_{r=1,2,\dots},
$$
where $M_f$ stands for the operator of multiplication by $f$. If $f$ is alternating (i.e, $f(z)=-f(-z)$), then $\left(T_{r+\frac{1}{2}}(f)\right)_{r=1,2,\dots}$ can be view as a Berezin-Toeplitz operator associated with $f$ whose elements acting on $\mathcal{H}^{alt}_{r+\frac{1}{2}}(j,\delta)$.

In addition, a \textit{Berezin-Toeplitz operator} is a family $\left(T_{r+\frac{1}{2}} \in \operatorname{End}(\mathcal{H}_{r+\frac{1}{2}}(j,\delta))\right)_{r=1,2,\dots}$ of the form
$$
T_{r+\frac{1}{2}}=\Pi_{r+\frac{1}{2}} M(f(\cdot, r))+R_r: \mathcal{H}_{r+\frac{1}{2}}(j,\delta)\rightarrow\mathcal{H}_{r+\frac{1}{2}}(j,\delta), \quad r=1,2, \ldots
$$
where $(f(\cdot, r))_r$ is a sequence of $\mathcal{C}^{\infty}(V/\Lambda)$ which admits an asymptotic expansion of the form $f_0+r^{-1} f_1+\cdots$  with coefficients $f_0, f_1, \ldots \in \mathcal{C}^{\infty}(V/\Lambda)$
and the family $\left(R_r \in \operatorname{End}(\mathcal{H}_{r+\frac{1}{2}}(j,\delta))\right)_{r}$ is a $O(r^{-\infty})$, i.e. for any $N$, there exists a positive $C_N$ such that for every $r$, $\left\|R_r\right\| \leqslant C_N r^{-N}$. We call the formal series $f_0+r^{-1} f_1+\cdots$ the total symbol of $\left(T_{r+\frac{1}{2}}\right)_r$ and $f_0$ the principal symbol of $\left(T_{r+\frac{1}{2}}\right)_r$. Also if the symbol of a Berezin-Toeplitz operator is alternating, then it can be considered as a Berezin-Toeplitz operator on $\mathcal{H}^{alt}_{r+\frac{1}{2}}(j,\delta)$

We require the following theorems, which can be readily extended to the $SO(3)$ case.

\begin{thm}\cite{Bordemann_1994}\label{t3.1}
For any $f\in \mathcal{C}^{\infty}(V/(\Lambda\rtimes\mathbb{Z}_2))$ we have that 
$$\lim _{r \rightarrow \infty}\left\|T_{r+\frac{1}{2}}(f)\right\|=\sup _{z\in V/(\Lambda\rtimes\mathbb{Z}_2)}|f(z)|.$$
\end{thm}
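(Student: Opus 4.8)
The plan is to establish the two inequalities $\limsup_{r\to\infty}\|T_{r+\frac12}(f)\|\le\sup_z|f(z)|$ and $\liminf_{r\to\infty}\|T_{r+\frac12}(f)\|\ge\sup_z|f(z)|$ separately. The first is immediate: $T_{r+\frac12}(f)=\Pi_{r+\frac12}M_f$ restricted to $\mathcal{H}_{r+\frac12}(j,\delta)$, the orthogonal projector $\Pi_{r+\frac12}$ has operator norm $\le 1$, and $M_f$ is multiplication by $f$ on $L^2(V/\Lambda,L^{r}\otimes L^{\frac12}\otimes\delta)$, so $\|M_f\|=\sup_{z\in V/\Lambda}|f(z)|=\sup_{z\in V/(\Lambda\rtimes\mathbb{Z}_2)}|f(z)|$, the last equality because $f$ is $\mathbb{Z}_2$-invariant. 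Hence $\|T_{r+\frac12}(f)\|\le\sup_z|f(z)|$ for every $r$. The content of the theorem is the reverse inequality, which I would prove by a coherent-state argument.

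Fix $z_0\in V$ and let $\Psi^{(r)}_{z_0}\in\mathcal{H}_{r+\frac12}(j,\delta)$ be the coherent state at $z_0$, normalized so that $\langle\Psi^{(r)}_{z_0},\Psi^{(r)}_{z_0}\rangle'_{r+\frac12}=1$; up to a phase it is the unique holomorphic section representing the evaluation functional $\Psi\mapsto(h^{\otimes r}\otimes h_{\frac12}\otimes h_\delta)_{z_0}(\Psi(z_0),e_{z_0})$ for a fixed unit vector $e_{z_0}$ in the fibre over $z_0$. In this flat geometry $\Psi^{(r)}_{z_0}$ can be written explicitly in terms of theta functions (obtained from $g_0$, $t$, $\Omega_\mu$ by applying the translation operators $T^{*}_{x}$ of \eqref{T}), which gives direct access to the estimate below. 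The key input is the concentration estimate: the probability measures $\nu_r=(h^{\otimes r}\otimes h_{\frac12}\otimes h_\delta)_z\big(\Psi^{(r)}_{z_0}(z),\Psi^{(r)}_{z_0}(z)\big)\,\omega$ on a fundamental domain $D$ satisfy a uniform bound $\frac{d\nu_r}{\omega}(z)\le C\,r\,e^{-c\,r\,d(z,z_0)^2}$ with $c,C>0$ independent of $r$, and therefore converge weakly to the Dirac mass at $z_0$. Granting this, and using that $\Psi^{(r)}_{z_0}\in\mathcal{H}_{r+\frac12}(j,\delta)$ together with the self-adjointness of $\Pi_{r+\frac12}$, we obtain $\langle T_{r+\frac12}(f)\Psi^{(r)}_{z_0},\Psi^{(r)}_{z_0}\rangle'_{r+\frac12}=\langle M_f\Psi^{(r)}_{z_0},\Psi^{(r)}_{z_0}\rangle'_{r+\frac12}=\int_D f\,d\nu_r\to f(z_0)$, so that $\|T_{r+\frac12}(f)\|\ge|f(z_0)|-o(1)$. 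Letting $z_0$ tend to a point where $|f|$ attains $\sup_z|f(z)|$ gives the reverse inequality, and combining the two finishes the proof.

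Two points make this genuinely an $SO(3)$ statement. First, when $M_f$ preserves $\mathcal{H}^{alt}_{r+\frac12}(j,\delta)$ --- which is the case for the even symbols of interest --- one wants the same conclusion for $T_{r+\frac12}(f)$ restricted to the alternating subspace; there I would run the lower bound with the normalized odd coherent state proportional to $\Psi^{(r)}_{z_0}-\Psi^{(r)}_{-z_0}$. For $z_0$ not a half-lattice point its two summands are asymptotically orthogonal and concentrate at $z_0$ and $-z_0$ respectively, and since $f(z_0)=f(-z_0)$ the diagonal matrix element still tends to $f(z_0)$; as $\sup_z|f(z)|$ is approximated arbitrarily well at non-half-lattice points, the bound persists. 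Second, the half-form $\delta$, the twist by $L^{\frac12}$ and the level shift $r\mapsto r+\frac12$ enter only through the explicit normalizing constant (the factor $(\tfrac{2r+1}{4\pi})^{1/4}$ in $\Psi_0$) and through the width of the Gaussian, both of which change by bounded amounts; so none of the estimates is affected, which is the precise sense in which the classical result \cite{Bordemann_1994} extends readily.

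The hard part is the concentration estimate for $\nu_r$. In the abstract Kähler framework it is the off-diagonal Gaussian decay of the Szego--Bergman kernel that underlies \cite{Bordemann_1994}; here, however, $V$ is flat, $L$ has constant curvature $\omega$, and the holomorphic sections are genuine theta functions, so the estimate reduces to an elementary large-$r$ dominant-term analysis of the series $g_0(z)=\sum_{m\in\mathbb{Z}}\exp\left(m\pi i\left((4r+2)z+(2r+1)m\tau\right)\right)$ and its $T^{*}_{x}$-translates --- in essence the same computation that already yields the orthonormal basis $(\Psi_l)$. Once that estimate is secured, the remainder of the argument is formal.
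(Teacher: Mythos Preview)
The paper does not prove this statement at all: Theorem~3.1 is simply quoted from \cite{Bordemann_1994} with the remark that it ``can be readily extended to the $SO(3)$ case,'' and is then used as a black box. So there is no paper proof to compare against.

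Your coherent-state argument is a correct and standard route to the result. The upper bound is indeed immediate from $\|\Pi_{r+\frac12}\|\le1$ and $\|M_f\|=\sup|f|$. For the lower bound, the Berezin transform identity $\langle T_{r+\frac12}(f)\Psi^{(r)}_{z_0},\Psi^{(r)}_{z_0}\rangle=\int_D f\,d\nu_r$ together with weak convergence $\nu_r\to\delta_{z_0}$ gives exactly what you need; on the flat torus the Gaussian concentration of the normalized coherent state is an explicit theta-function computation, as you say. One small cleanup: since $V/\Lambda$ is compact and $f$ continuous, $|f|$ attains its supremum at some $z_0$, so you can simply fix that $z_0$ from the outset rather than ``let $z_0$ tend'' to a maximizer. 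Your treatment of the alternating subspace via the antisymmetrized coherent state $\Psi^{(r)}_{z_0}-\Psi^{(r)}_{-z_0}$ is also sound: for $z_0$ away from the half-lattice the cross term $\langle\Psi^{(r)}_{z_0},\Psi^{(r)}_{-z_0}\rangle$ is $O(r^{-\infty})$ by the same off-diagonal decay, and continuity of $f$ lets you avoid the finitely many half-lattice points. For context, the original argument in \cite{Bordemann_1994} goes through the Boutet~de~Monvel--Guillemin symbol calculus rather than coherent states directly, but both approaches rest on the same kernel asymptotics, and yours is arguably the more transparent one in this flat setting.
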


\begin{thm}\cite{Charles2003BerezinToeplitzOA}\label{t3.2}
    A family $\left(T_{r+\frac{1}{2}} \in \operatorname{End}(\mathcal{H}_{r+\frac{1}{2}}(j,\delta))\right)_{r=1,2,\dots}$ is a Berezin-Toeplitz operator if and only if its Schwartz kernel $\left(K_{r+\frac{1}{2}}(\cdot,\cdot)\right)_r$ satisfies the following conditions:\\
- $\left(K_{r+\frac{1}{2}}(\cdot,\cdot)\right)_r$ is a $O\left(r^{-\infty}\right)$ uniformly on any compact set of $(V/\Lambda)^2$ which does not meet the diagonal.\\
- Let $U$ be a neighborhood of diagonal. We have over $U$
$$
K_{r+\frac{1}{2}}(z, w)=\Pi_{r+\frac{1}{2}}(z,w) f(z, w, r)+O\left(r^{-\infty}\right) .
$$
where $\Pi_{r+\frac{1}{2}}(\cdot,\cdot)$ is the Schwartz kernel of the projector $\Pi_{r+\frac{1}{2}}$ and $f(\cdot, r)$ is a sequence of $\mathcal{C}^{\infty}(U)$ which admits an asymptotic expansion of the form $f_0+r^{-1} f_1+r^{-2} f_2+\cdots$.
Furthermore, the principal symbols of $\left(T_{r+\frac{1}{2}}\right)_r$ is $g_0$ which is the restriction of $f_0$ to the diagonal.
\end{thm}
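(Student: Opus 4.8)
The plan is to transplant the proof of \cite{Charles2003BerezinToeplitzOA} to the flat toric model of Sect.~2; the only genuinely new bookkeeping is the half-form twist $L^{\frac{1}{2}}\otimes\delta$ and the later restriction to the alternating subspace, neither of which obstructs the argument. The single analytic input the whole proof rests on is the structure of the Szeg\H{o} kernel $\Pi_{r+\frac{1}{2}}(\cdot,\cdot)$ of the projector $\Pi_{r+\frac{1}{2}}$. Using the explicit orthonormal basis $(\Psi_l)$ one has $\Pi_{r+\frac{1}{2}}(z,w)=\sum_l\Psi_l(z)\overline{\Psi_l(w)}$, and a Poisson-summation estimate on this theta-type sum gives the two facts I would use as a black box: (i) $\Pi_{r+\frac{1}{2}}(z,w)=O(r^{-\infty})$ uniformly on any compact subset of $(V/\Lambda)^2$ not meeting the diagonal; (ii) on a neighbourhood $U$ of the diagonal $\Pi_{r+\frac{1}{2}}(z,w)$ is a Gaussian-peaked kernel of width $\sim r^{-1/2}$, with a full asymptotic expansion
$$
\Pi_{r+\frac{1}{2}}(z,w)=\tfrac{2r+1}{4\pi}\,E(z,w)^{\,2r+1}\big(u_0(z,w)+r^{-1}u_1(z,w)+\cdots\big),
$$
where $E$ is holomorphic in $z$, antiholomorphic in $w$, of modulus one on the diagonal and strictly less than $1$ off it, and $u_0$ is nowhere vanishing.

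For the \emph{only if} direction, suppose $T_{r+\frac{1}{2}}=\Pi_{r+\frac{1}{2}}M(f(\cdot,r))+R_r$ with $f(\cdot,r)\sim f_0+r^{-1}f_1+\cdots$ and $\|R_r\|=O(r^{-\infty})$. The Schwartz kernel of $\Pi_{r+\frac{1}{2}}M(f(\cdot,r))$ is $\Pi_{r+\frac{1}{2}}(z,w)f(w,r)$: off the diagonal it is $O(r^{-\infty})$ by (i), and on $U$ one Taylor-expands $f(w,r)$ about $w=z$ and folds the remainder into the asymptotic expansion, the series staying in integer powers of $r^{-1}$ by the parity of the peaked kernel. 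The kernel of $R_r$ is $\langle R_r\pi_w,\pi_z\rangle$, where $\pi_z=\Pi_{r+\frac{1}{2}}(\cdot,z)$ is the coherent state and $\|\pi_z\|^2=\Pi_{r+\frac{1}{2}}(z,z)=O(r)$, so $|K_{R_r}(z,w)|\le\|R_r\|\,\|\pi_z\|\,\|\pi_w\|=O(r^{-\infty})$, and the kernel conditions follow with principal symbol the restriction of $f_0$ to the diagonal. For the \emph{if} direction I would run the usual successive-approximation scheme: given the two kernel conditions with leading term $f_0$, set $g_0=f_0|_{\mathrm{diag}}$; then $T_{r+\frac{1}{2}}-\Pi_{r+\frac{1}{2}}M(g_0)$ again satisfies the hypotheses but now at leading order $r^{-1}$, since on $U$ the difference of kernels is $\Pi_{r+\frac{1}{2}}(z,w)\big(f(z,w,r)-g_0(w)\big)$ and $f_0(z,w)-g_0(w)$ vanishes on the diagonal, hence is $O(r^{-1})$ against $\Pi_{r+\frac{1}{2}}$ after a stationary-phase expansion that simultaneously records the first-order $(w-z)$ correction into $f_1$. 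Iterating produces $f_0,f_1,\dots\in\mathcal C^{\infty}(V/\Lambda)$; a Borel summation assembles them into a genuine sequence $f(\cdot,r)\sim\sum_k r^{-k}f_k$, and then $T_{r+\frac{1}{2}}-\Pi_{r+\frac{1}{2}}M(f(\cdot,r))$ has $O(r^{-\infty})$ Schwartz kernel, hence $O(r^{-\infty})$ operator norm by a Schur-type estimate together with $\|\Pi_{r+\frac{1}{2}}M(h)\|\le\sup|h|$ (Theorem~\ref{t3.1}).

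Finally the $SO(3)$ version follows by $\mathbb Z_2$-equivariance: the involution $z\mapsto -z$ commutes with $\Pi_{r+\frac{1}{2}}$ and with $M_f$ whenever $f$ is alternating, so everything above descends to the $(-1)$-eigenspace $\mathcal H^{alt}_{r+\frac{1}{2}}(j,\delta)$, whose reproducing kernel $\tfrac{1}{2}\big(\Pi_{r+\frac{1}{2}}(z,w)-\Pi_{r+\frac{1}{2}}(z,-w)\big)$ still agrees with the diagonal-peaked kernel of (ii) near the diagonal (the second term being $O(r^{-\infty})$ there). I expect the main obstacle to be the \emph{if} direction: showing that matching the Schwartz kernel to all orders genuinely forces an operator-norm remainder that is $O(r^{-\infty})$, and that the Borel summation is compatible with all the uniform estimates. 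This is exactly the technical heart of \cite{Charles2003BerezinToeplitzOA}; the only thing specific to the present setting to be verified is that the flat metric on $V/\Lambda$, the half-form bundle $\delta$, and the half-integer shift $r+\frac{1}{2}$ (rather than an integer level) leave those estimates intact, which they do because the Szeg\H{o} asymptotics (i)--(ii) are local and insensitive to these modifications.
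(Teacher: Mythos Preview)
The paper does not prove this statement at all: Theorem~\ref{t3.2} is stated with a citation to \cite{Charles2003BerezinToeplitzOA} and used as a black box, with only the remark that it ``can be readily extended to the $SO(3)$ case.'' There is no proof in the paper to compare your proposal against.

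Your outline is a reasonable sketch of how the Charles argument goes, and your observation that the half-form twist and half-integer level do not disturb the local Szeg\H{o} asymptotics is exactly the content of the paper's one-line extension remark. But since the paper offers nothing beyond the citation, there is no meaningful comparison to make: you have written more than the paper does on this point.
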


Now we state the main theorem of this section:
\begin{thm}
For any oriented curve $\gamma\subset\Sigma$, $\left(T_{r+\frac{1}{2}}(\gamma)\right)_{r=1,2,\dots}$ is a Berezin-Toeplitz
operator with principal symbol $F_{\gamma}(z)=-2\cos(\frac{1}{2}\omega(\gamma,z))$.
\end{thm}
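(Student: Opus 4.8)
The plan is to apply the Schwartz-kernel criterion of Theorem \ref{t3.2}. Since Schwartz kernels add and the Berezin-Toeplitz operators form a vector space, it suffices to treat separately the two translation operators $T^{*}_{\frac{\gamma}{2r+1}}$ and $T^{*}_{-\frac{\gamma}{2r+1}}$ appearing in $T_{r+\frac12}(\gamma)=-\bigl(T^{*}_{\frac{\gamma}{2r+1}}+T^{*}_{-\frac{\gamma}{2r+1}}\bigr)$. I would show that each $\bigl(T^{*}_{\pm\frac{\gamma}{2r+1}}\bigr)_r$ is a Berezin-Toeplitz operator on $\mathcal{H}_{r+\frac12}(j,\delta)$ with principal symbol $e^{\pm\frac{i}{2}\omega(\gamma,z)}$, and then conclude, via $-2\cos\theta=-(e^{i\theta}+e^{-i\theta})$, that $\bigl(T_{r+\frac12}(\gamma)\bigr)_r$ is a Berezin-Toeplitz operator with principal symbol $F_{\gamma}(z)=-2\cos\!\bigl(\tfrac12\omega(\gamma,z)\bigr)$; note $F_\gamma$ is invariant under $z\mapsto-z$, consistent with $T_{r+\frac12}(\gamma)$ descending to $\mathcal{H}^{alt}_{r+\frac12}(j,\delta)$.

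For the kernel, fix $x=\pm\frac{\gamma}{2r+1}$. Since $T^{*}_x$ is an endomorphism of $\mathcal{H}_{r+\frac12}(j,\delta)$ and the Szeg\H{o} kernel $\Pi_{r+\frac12}(\cdot,\cdot)$ reproduces holomorphic sections, $(T^{*}_x\Psi)(z)=\exp\!\bigl(\tfrac{(2r+1)i}{4}\omega(x,z)\bigr)\Psi(z+x)=\int_D\exp\!\bigl(\tfrac{(2r+1)i}{4}\omega(x,z)\bigr)\Pi_{r+\frac12}(z+x,w)\Psi(w)\,\omega$, so the Schwartz kernel of $T^{*}_x$ is $K^{x}_{r+\frac12}(z,w)=\exp\!\bigl(\tfrac{(2r+1)i}{4}\omega(x,z)\bigr)\Pi_{r+\frac12}(z+x,w)$. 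As $\|x\|=O(r^{-1})\to0$, for $(z,w)$ in any compact subset of $(V/\Lambda)^2$ off the diagonal the point $(z+x,w)$ also stays off the diagonal for large $r$, and $\Pi_{r+\frac12}$ is $O(r^{-\infty})$ there, hence so is $K^{x}_{r+\frac12}$, giving the first condition of Theorem \ref{t3.2}. Near the diagonal I would use the standard form $\Pi_{r+\frac12}(z,w)=c_r\,E(z,w)^{\,r+\frac12}u(z,w,r)$ with $c_r$ of order $r$, $E(z,z)=1$, $|E(z,w)|<1$ off the diagonal, $u=u_0+r^{-1}u_1+\cdots$ and $u_0$ nonvanishing on the diagonal (the flat, metaplectically corrected Szeg\H{o} kernel). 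Taylor expanding in the first variable and using the decisive identity $(r+\tfrac12)\cdot\tfrac{1}{2r+1}=\tfrac12$, the linear term survives as $O(1)$ while quadratic and higher terms are $O(r^{-1})$, so over a neighborhood $U$ of the diagonal $K^{x}_{r+\frac12}(z,w)=\Pi_{r+\frac12}(z,w)f(z,w,r)+O(r^{-\infty})$ with $f(z,w,r)=e^{\pm\frac{i}{4}\omega(\gamma,z)}\,e^{\pm\frac12\gamma\cdot\partial_z\log E(z,w)}\bigl(1+O(r^{-1})\bigr)$, using $\tfrac{(2r+1)i}{4}\omega(x,z)=\pm\tfrac{i}{4}\omega(\gamma,z)$, an $r$-independent smooth function. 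Thus $f$ admits an asymptotic expansion $f_0+r^{-1}f_1+\cdots$, verifying the second condition, and the principal symbol of $\bigl(T^{*}_{\pm\frac{\gamma}{2r+1}}\bigr)_r$ is $f_0|_{\text{diag}}=e^{\pm\frac{i}{4}\omega(\gamma,z)}e^{\pm\frac12\gamma\cdot\partial_z\log E(z,z)}$.

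It then remains to compute $\partial_z\log E$ on the diagonal. For the flat prequantum data of Section 2 this is the usual identification of the holomorphic derivative of the Szeg\H{o} phase on the diagonal with the connection/hermitian data, and I expect it to yield $\gamma\cdot\partial_z\log E(z,z)=\tfrac{i}{2}\omega(\gamma,z)$, whence the principal symbol of $\bigl(T^{*}_{\pm\frac{\gamma}{2r+1}}\bigr)_r$ equals $e^{\pm\frac{i}{4}\omega(\gamma,z)}e^{\pm\frac{i}{4}\omega(\gamma,z)}=e^{\pm\frac{i}{2}\omega(\gamma,z)}$. Summing, $\bigl(T_{r+\frac12}(\gamma)\bigr)_r=-\bigl(T^{*}_{\frac{\gamma}{2r+1}}+T^{*}_{-\frac{\gamma}{2r+1}}\bigr)$ is a Berezin-Toeplitz operator with principal symbol $-\bigl(e^{\frac{i}{2}\omega(\gamma,z)}+e^{-\frac{i}{2}\omega(\gamma,z)}\bigr)=F_{\gamma}(z)$.

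The main obstacle is Step 3 together with the bookkeeping in Step 2: writing out the flat, metaplectically corrected Szeg\H{o} kernel with all constants (powers of $2\pi$, the factor $2r+1$ versus $r+\tfrac12$, and the half-form $\delta$), and verifying that the ``kinematic'' phase $e^{\pm\frac{i}{4}\omega(\gamma,z)}$ from the explicit definition of $T^{*}_x$ and the ``holomorphic'' phase $e^{\pm\frac{i}{4}\omega(\gamma,z)}$ coming from the kernel add rather than cancel. A reassuring cross-check uses the explicit basis of Section 2: for $\gamma=\mu$ the operator $T^{*}_{\frac{\mu}{2r+1}}$ is diagonal with eigenvalue $e^{2\pi i l/(2r+1)}$ on $\Psi_l$, and since $\Psi_l$ is a Lagrangian state concentrated on $\{q=l/(2r+1)\}$ while $\omega(\mu,z)=4\pi q$, it acts asymptotically as multiplication by $e^{2\pi i q}=e^{\frac{i}{2}\omega(\mu,z)}$; the Heisenberg relation $T^{*}_xT^{*}_y=e^{\frac{(2r+1)i}{4}\omega(y,x)}T^{*}_{x+y}$ then propagates the computation from $\mu$ and $\lambda$ to an arbitrary class $\gamma\in H_1(\Sigma,\mathbb{Z})$.
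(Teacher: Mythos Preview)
Your overall strategy coincides with the paper's: reduce to the translation operator $T^{*}_{\frac{\gamma}{2r+1}}$ and apply the Schwartz-kernel criterion of Theorem~\ref{t3.2} to its kernel $K^{(\gamma)}_{r+\frac12}(\cdot,w)=T^{*}_{\frac{\gamma}{2r+1}}\Pi_{r+\frac12}(\cdot,w)$. Where you diverge is in the execution. The paper does not appeal to an abstract parametrix $c_r\,E^{\,r+\frac12}u$ and a Taylor expansion in the shift; instead it computes $\Pi_{r+\frac12}(z,w)$ explicitly from the theta-basis $(\Psi_l)$, proves the off-diagonal $O(r^{-\infty})$ decay by hand via three Gaussian-sum estimates (Lemmas~\ref{l3.4}--3.6), applies Poisson summation to close the near-diagonal form as
\[
\Pi_{r+\frac12}(z,w)=\tfrac{2r+1}{\sqrt{8\pi b}}\exp\!\Bigl(-\tfrac{(2r+1)\pi}{2b}(z-\bar w)^2\Bigr)\,t^{r+\frac12}(z)\bar t^{\,r+\frac12}(w)\,\Omega_\mu\otimes\bar\Omega_\mu+O(r^{-\infty}),
\]
and then reads off the diagonal symbol directly as $\exp(2\pi i(q_0p-p_0q))=\exp(-\tfrac{i}{2}\omega(\gamma,z))$. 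Your route is more conceptual and would transplant to other prequantized tori without redoing any theta algebra; the paper's route is fully self-contained and leaves no constants or signs implicit.

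The one genuine gap in your write-up is exactly the one you flag: you never verify $\gamma\cdot\partial_z\log E(z,z)=\tfrac{i}{2}\omega(\gamma,z)$, only ``expect'' it. In the paper's setting this is precisely what the closed-form kernel above delivers (differentiate the Gaussian phase and the section $t^{r+\frac12}$ and evaluate at $w=z$). Your final cross-check via the eigenvalue action of $T^{*}_{\frac{\mu}{2r+1}}$ on $\Psi_l$ together with the Heisenberg relation $T^{*}_xT^{*}_y=e^{\frac{(2r+1)i}{4}\omega(y,x)}T^{*}_{x+y}$ is in fact enough to pin down the symbol for all $\gamma\in H_1(\Sigma,\mathbb{Z})$ without ever computing $E$, but as written you stop one step short of carrying it out. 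To make the argument complete you should either (i) write down the flat Szeg\H{o} phase $E$ explicitly and differentiate, or (ii) promote the cross-check to a proof by showing that the known symbols for $\gamma=\mu,\lambda$ and the Heisenberg relation force the symbol of $T^{*}_{\frac{\gamma}{2r+1}}$ for arbitrary $\gamma$. (The apparent sign discrepancy with the paper's $e^{-\frac{i}{2}\omega(\gamma,z)}$ is immaterial for $F_\gamma$, since the two signs are exchanged under $\gamma\mapsto-\gamma$ and then summed.)
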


\begin{proof}
Recall that $T_{r+\frac{1}{2}}(\gamma)=-\left(T_{\frac{\gamma}{2r+1}}^*+T_{-\frac{\gamma}{2r+1}}^*\right)$. Thus it suffices to show that $\left(T_{\frac{\gamma}{2r+1}}^*\right)_r$ is a Berezin-Toeplitz operator with principal symbol $f_\gamma(z)=exp(-\frac{i}{2}\omega(\gamma,z))$. In order to using Theorem\hyperref[t3.2]{(3.2)}, we will evaluate the pointwise norm of the Schwartz kernel $\left(K_{r+\frac{1}{2}}^{(\gamma)}(\cdot,\cdot)\right)_r$ of $\left(T_{\frac{\gamma}{2r+1}}^*\right)_r$. Since $K_{r+\frac{1}{2}}^{(\gamma)}(\cdot,w)=T_{\frac{\gamma}{2r+1}}^*\Pi_{r+\frac{1}{2}}(\cdot,w)$, what we need is to compute $\Pi_{r+\frac{1}{2}}(z,w)$ and evaluate its pointwise norm, away from diagonal and near it.

Since $\Pi_{r+\frac{1}{2}}(z,w)$ is the Schwartz kernel of a projector, we have that 
$$
\Pi_{r+\frac{1}{2}}(z,w)=\sum_{l=0}^{2r}\Psi_l(z)\otimes\bar{\Psi}_l(w).
$$
From Sect.2 we know that 
$$
\Psi_l(z)=T_{\frac{\gamma}{2r+1}}^*\Psi_0(z)=\left(\frac{2r+1}{4 \pi}\right)^{\frac{1}{4}}\sum_{m\in\mathbf{Z}}exp(2\pi i \theta_{l,m}(z))t^{r+\frac{1}{2}}(z)\Omega_{\mu}(z),
$$
where $\theta_{l,m}(z)=z(l+(2r+1)m)+\frac{\tau}{4r+2}(l+(2r+1)m)^2$. Consequently, 

\begin{equation}
\begin{split}
    \Pi_{r+\frac{1}{2}}(z,w)&=\left(\frac{2r+1}{4 \pi}\right)^{\frac{1}{2}}\left(\sum_{l=0}^{2r}\sum_{m,n\in\mathbb{Z}}exp(2\pi i(\theta_{l,m}(z)-\overline{\theta_{l,n}(w)}))\right) \\
&\times t^{r+\frac{1}{2}}(z)\Omega_{\mu}(z)\otimes \bar{t}^{r+\frac{1}{2}}(w)\bar{\Omega}_{\mu}(w).
\end{split}
\end{equation}

Let $z=p_1+\tau q_1$ and $w=p_2+\tau q_2$. We evaluate $\left\|\Pi_{r+\frac{1}{2}}(z,w)\right\|$ in the following lemmas.

\begin{lem}\label{l3.4}
For every $\epsilon\in(0,\frac{1}{2}]$, if $dist(q_1-q_2,\mathbb{Z})\geq\epsilon$, then $\left\|\Pi_{r+\frac{1}{2}}(z,w)\right\|=O(r^{-\infty})$.
\end{lem}

\begin{proof}
We have that 
$$\left\|\Pi_{r+\frac{1}{2}}(z,w)\right\|\leq C\sqrt{r+\frac{1}{2}}\sum_{l=0}^{2r}\sum_{m,n\in\mathbb{Z}}exp(-2\pi\Theta_{l+(2r+1)m,l+(2r+1)m}(z,w)),$$
where
\begin{equation}
\begin{split}
    \Theta_{s,t}(z,w)&=b(r+\frac{1}{2})((q_1+\frac{s}{2r+1})^2+(q_2+\frac{t}{2r+1})^2) \\
&\geq C_1(r+\frac{1}{2})((q_1+\frac{s}{2r+1})^2+(q_1-q_2+\frac{s-t}{2r+1})^2).
\end{split}
\end{equation}

It follows that 
\begin{equation}\label{5}
\left\|\Pi_{r+\frac{1}{2}}(z,w)\right\|\leq C\sqrt{r+\frac{1}{2}}\sum_{l=0}^{2r}\sum_{m,n\in\mathbb{Z}}exp(-2\pi C_1((q_1+m+\frac{l}{2r+1})^2+(q_1-q_2+n)^2).
\end{equation}

Compare with an integral we have that 
\begin{equation}\label{6}
 \sum_{l=0}^{2r}\sum_{m\in\mathbb{Z}}exp(-(r+\frac{1}{2})(q_1+m+\frac{l}{2r+1})^2)=O(r^{\frac{3}{2}}),
\end{equation}

and using $dist(q_1-q_2,\mathbb{Z})\geq\epsilon$ we have
\begin{equation}\label{7}
\begin{split}
&\sum_{n\in\mathbb{Z}}exp(-(r+\frac{1}{2})(q_1-q_2+n)^2)\leq\sum_{n\in\mathbb{Z}}exp(-(r+\frac{1}{2})(\epsilon+n)^2)\\
&\leq 2\sum_{n=0}^{\infty}exp(-(r+\frac{1}{2})(\epsilon+n)^2)\\
&=2exp(-(r+\frac{1}{2})\epsilon^2)+\sum_{n=1}^{\infty}exp(-(r+\frac{1}{2})n^2)\\
&\leq 2exp(-(r+\frac{1}{2})\epsilon^2)+\sum_{n=1}^{\infty}exp(-(r+\frac{1}{2})n)\leq C_2 exp(-r/C_3).
\end{split}    
\end{equation}

We conclude the proof of lemma by \hyperref[5]{(5)}, \hyperref[6]{(6)}, and \hyperref[7]{(7)}.
\end{proof}

Define a section $\Pi_{r+\frac{1}{2}}^{(1)}(z,w)$ as follow:

\begin{equation}
\begin{split}
    \Pi_{r+\frac{1}{2}}^{(1)}(z,w)&=\left(\frac{2r+1}{4 \pi}\right)^{\frac{1}{2}}\left(\sum_{l=0}^{2r}\sum_{m\neq n}exp(2\pi i(\theta_{l,m}(z)-\overline{\theta_{l,n}(w)}))\right) \\
&\times t^{r+\frac{1}{2}}(z)\Omega_{\mu}(z)\otimes \bar{t}^{r+\frac{1}{2}}(w)\bar{\Omega}_{\mu}(w).
\end{split}
\end{equation}

\begin{lem}\label{l3.5}
If $|q_1-q_2|\leq\epsilon$, then $\left\|\Pi_{r+\frac{1}{2}}^{(1)}(z,w)\right\|=O(r^{-\infty})$.
\end{lem}

\begin{proof}
As in the proof of Lemma\hyperref[l3.4]{(3.4)}, it suffices to show that 
\begin{equation}
\begin{split}
\sum_{n\in\mathbb{Z}\backslash \{0\} }exp(-(r+\frac{1}{2})(q_1-q_2+n)^2)&\leq2\sum_{n=0}^{\infty}exp(-(r+\frac{1}{2})(n+\frac{1}{2})^2)\\
&\leq Cexp(-r/C_1).    
\end{split}
\end{equation}
\end{proof}

Consequently, for $(z,w)$ sufficiently close to the diagonal we have
\begin{equation}
\begin{split}
    \Pi_{r+\frac{1}{2}}(z,w)&=\left(\frac{2r+1}{4 \pi}\right)^{\frac{1}{2}}\left(\sum_{m\in\mathbb{Z}}exp(2\pi im(z-\bar{w})-\frac{2\pi b m^2}{2r+1})\right) \\
&\times t^{r+\frac{1}{2}}(z)\Omega_{\mu}(z)\otimes \bar{t}^{r+\frac{1}{2}}(w)\bar{\Omega}_{\mu}(w)+O(r^{-\infty})\\
&=\frac{2r+1}{\sqrt{8\pi b}}\left(\sum_{n\in\mathbb{Z}}exp(-\frac{(2r+1)\pi}{2b}(n-(z-\bar{w})^2))\right)\\
&\times t^{r+\frac{1}{2}}(z)\Omega_{\mu}(z)\otimes \bar{t}^{r+\frac{1}{2}}(w)\bar{\Omega}_{\mu}(w)+O(r^{-\infty}),
\end{split}
\end{equation}
where the last equality follow from Poisson's summation formula.

Define another section $\Pi_{r+\frac{1}{2}}^{(2)}(z,w)$ as
\begin{equation}
\begin{split}
 \Pi_{r+\frac{1}{2}}^{(2)}(z,w)&=\frac{2r+1}{\sqrt{8\pi b}}\left(\sum_{n\in\mathbb{Z}\backslash\{0\}}exp(-\frac{(2r+1)\pi}{2b}(n-(z-\bar{w})^2))\right)\\
&\times t^{r+\frac{1}{2}}(z)\Omega_{\mu}(z)\otimes \bar{t}^{r+\frac{1}{2}}(w)\bar{\Omega}_{\mu}(w).  
\end{split}
\end{equation}

\begin{lem}
There exists $\varepsilon>0$ such that for $|p_1-p_2|\leq\varepsilon$ and $|q_1-q_2|\leq\varepsilon$ we have $\left\|\Pi_{r+\frac{1}{2}}^{(2)}(z,w)\right\|=O(r^{-\infty})$.
\end{lem}

\begin{proof}
We have that
\begin{equation}
\left\|\Pi_{r+\frac{1}{2}}^{(2)}(z,w)\right\|\leq C(r+\frac{1}{2})\sum_{n\in\mathbb{Z}\backslash\{0\}}exp(-\frac{(2r+1)\pi}{2b}(n-(p_2-p_1)+a(q_2-q_1)^2),
\end{equation}
and we conclude as in the proof of Lemma\hyperref[l3.5]{(3.5)}
\end{proof}

Return to the proof of the theorem. Gathering all the lemmas we have 
\begin{equation}
 \begin{split}
    \Pi_{r+\frac{1}{2}}(z,w)&=\frac{2r+1}{\sqrt{8\pi b}}exp(-\frac{(2r+1)\pi}{2b}(z-\bar{w})^2)\\
&\times t^{r+\frac{1}{2}}(z)\Omega_{\mu}(z)\otimes \bar{t}^{r+\frac{1}{2}}(w)\bar{\Omega}_{\mu}(w)+O(r^{-\infty}),
\end{split}   
\end{equation}

Suppose $\gamma=p_0\mu+q_o\lambda$ and $z_0=p_o+q_0\tau$. we obtain that
\begin{equation}
K_{r+\frac{1}{2}}^{(\gamma)}(z,w)=T_{\frac{\gamma}{2r+1}}^*\Pi_{r+\frac{1}{2}}(z,w)=\Pi_{r+\frac{1}{2}}(z,w)f(z,w,r)+O(r^{-\infty}),
\end{equation}
on a small neighborhood of the diagonal, where $f(z,w,r)$ is defined as
\begin{equation}
 f(z,w,r)=exp(-\frac{\pi z_0}{b}((z-\bar{w})+\frac{z_0}{4r+2}))exp(2\pi i q_0(z+\frac{z_0}{2r+1})).   
\end{equation}

Restrict $f(z,w,r)$ to the diagonal we have 
\begin{equation}
 f(z,z,r)=exp(2\pi i(q_0p-qp_0)-\frac{\pi}{b(2r+1)}|z_0|^2) =exp(-\frac{i}{2}\omega(\gamma,z))+O(r^{-1}), 
\end{equation}
which conclude the proof by Theorem\hyperref[t3.2]{(3.2)}.
\end{proof}

\section{Toward the AJ conjecture}
In this section we will prove that as $\ast$-algebra, the space of $SO(3)$-Berezin-Toeplitz curve operators is isomorphism to the space of $SO(3)$-skein theorical curve operators. As an application, we make a conjecture that the Berezin-Toeplitz operator associated with the A-polynomial annihilates the corresponding $SO(3)$-knot state. 

Denote by $\mathcal{T}_{BT}$ the set of $SO(3)$-Berezin-Toeplitz operators and denote by $\mathcal{T}_{curv}$ the set of $SO(3)$-geometric Berezin-Toeplitz operators. We have just shown that $\mathcal{T}_{curv}\subset\mathcal{T}_{BT}$. Denote by $\mathcal{T}_{curv}^{(r)}$ the vector space $\{T_{r+\frac{1}{2}}(\gamma)|\gamma\subset\Sigma\}$ for $r=1,2,\cdots$. Define a multiplication $\ast$ on $\mathcal{T}_{curv}^{(r)}$ as 
\begin{equation}
T_{r+\frac{1}{2}}(\gamma_1)\ast T_{r+\frac{1}{2}}(\gamma_2)=T_{r+\frac{1}{2}}(\gamma_2)\circ T_{r+\frac{1}{2}}(\gamma_1),
\end{equation}
making $\mathcal{T}_{curv}^{(r)}$ into a $\ast$-algebra. 

On the topological side, denote by $\mathcal{S}_{curv}^{(r)}$ the Kauffman skein module of the cylinder over a torus $K_A(T^2\times I)$ with $A=exp(\frac{2\pi i}{4r+2})$, that is,
\begin{equation}
\mathcal{S}_{curv}^{(r)}=\{Z_{r} '( \gamma )|\gamma\subset\Sigma\}.
\end{equation}

The Kauffman skein module $\mathcal{S}_{curv}^{(r)}$ has a multiplicative structure induced by the topological operation of gluing one cylinder on top of the other. The product $\gamma_1\ast\gamma_2$ is the result of laying $\gamma_1$ over $\gamma_2$, which makes $\mathcal{S}_{curv}^{(r)}$ into an $\ast$-algebra.

\begin{thm}\label{t4.1}
There exists an isomorphism of algebras
$$
I'_r:\mathcal{T}_{curv}^{(r)}\rightarrow \mathcal{S}_{curv}^{(r)}.
$$
\end{thm}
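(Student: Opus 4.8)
The plan is to realize $I'_r$ as the restriction to curve operators of the conjugation isomorphism induced by the module isomorphism $I'_r\colon V'_r(\Sigma)\to\mathcal{H}^{alt}_{r+\frac{1}{2}}(j,\delta)$ recalled in Section~2 (we keep the same symbol, the two maps being compatible), and then to reduce everything to the commutative diagram of \cite{Fang2022SO3KnotSA}. That diagram says that $I'_r$ intertwines, for \emph{every} oriented curve $\gamma\subset\Sigma$, the skein curve operator $Z'_r(\gamma)$ acting on $V'_r(\Sigma)$ (through the $K_A(T^2\times I)$-module structure) with the Berezin--Toeplitz curve operator $T_{r+\frac{1}{2}}(\gamma)$ acting on $\mathcal{H}^{alt}_{r+\frac{1}{2}}(j,\delta)$, i.e. $T_{r+\frac{1}{2}}(\gamma)\circ I'_r=I'_r\circ Z'_r(\gamma)$. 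Consequently the map $X\mapsto (I'_r)^{-1}\circ X\circ I'_r$ is an isomorphism of associative algebras $\operatorname{End}(\mathcal{H}^{alt}_{r+\frac{1}{2}}(j,\delta))\to\operatorname{End}(V'_r(\Sigma))$ for ordinary composition, and it carries $T_{r+\frac{1}{2}}(\gamma)$ to $Z'_r(\gamma)$. I would then take $I'_r\colon\mathcal{T}_{curv}^{(r)}\to\mathcal{S}_{curv}^{(r)}$ to be the restriction of this conjugation, $\mathcal{S}_{curv}^{(r)}$ being read as the algebra of skein curve operators on $V'_r(\Sigma)$.

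With this choice the linear assertions are automatic: conjugation by an invertible operator is a linear bijection of the full endomorphism algebras which both respects and reflects linear relations, so it maps $\mathcal{T}_{curv}^{(r)}=\operatorname{span}\{T_{r+\frac{1}{2}}(\gamma)\}$ bijectively onto $\operatorname{span}\{Z'_r(\gamma)\}=\mathcal{S}_{curv}^{(r)}$, and a combination $\sum_i c_i T_{r+\frac{1}{2}}(\gamma_i)$ vanishes in $\operatorname{End}(\mathcal{H}^{alt}_{r+\frac{1}{2}}(j,\delta))$ if and only if $\sum_i c_i Z'_r(\gamma_i)$ vanishes in $\operatorname{End}(V'_r(\Sigma))$. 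This is exactly what is needed for $I'_r$ to be well defined and injective, and surjectivity holds by construction; so only multiplicativity remains.

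For multiplicativity, the definition of $\ast$ on $\mathcal{T}_{curv}^{(r)}$ together with the multiplicativity of conjugation for $\circ$ gives $I'_r\big(T_{r+\frac{1}{2}}(\gamma_1)\ast T_{r+\frac{1}{2}}(\gamma_2)\big)=I'_r\big(T_{r+\frac{1}{2}}(\gamma_2)\circ T_{r+\frac{1}{2}}(\gamma_1)\big)=Z'_r(\gamma_2)\circ Z'_r(\gamma_1)$, so what must be verified is the topological identity $Z'_r(\gamma_1)\ast Z'_r(\gamma_2)=Z'_r(\gamma_2)\circ Z'_r(\gamma_1)$ inside $\operatorname{End}(V'_r(\Sigma))$: laying $\gamma_1$ over $\gamma_2$ in the cylinder and gluing the cylinder onto the handlebody bounded by $\Sigma$ has the same effect as inserting $\gamma_1$ first and $\gamma_2$ afterwards. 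This is a direct unwinding of the gluing conventions defining the $K_A(T^2\times I)$-module structure on $V'_r(\Sigma)$, and the opposite-composition convention in the definition of $\ast$ on $\mathcal{T}_{curv}^{(r)}$ was put in precisely so that it holds. For the statement to be meaningful one also wants both sides closed under their products: on the skein side this is the product-to-sum formula of Frohman--Gelca, which rewrites $\gamma_1\ast\gamma_2$ as a combination of two (multi)curves, and on the Berezin--Toeplitz side it then follows by transport of structure, or directly from the relation $T^{*}_xT^{*}_y=\exp\!\big(\tfrac{(2r+1)i}{4}\omega(y,x)\big)T^{*}_{x+y}$ and $T_{r+\frac{1}{2}}(\gamma)=-\big(T^{*}_{\gamma/(2r+1)}+T^{*}_{-\gamma/(2r+1)}\big)$, so that the $\gamma$'s in the defining spans may be allowed to range over multicurves.

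I expect the \emph{convention-matching} to be the main obstacle: pinning down the orientations and the direction of gluing carefully enough that the topological stacking product really corresponds to the opposite composition of Berezin--Toeplitz operators --- equivalently, checking that the product-to-sum expansion of $T_{r+\frac{1}{2}}(\gamma_1)\circ T_{r+\frac{1}{2}}(\gamma_2)$ computed from the $T^{*}_x$'s agrees term by term, phases included, with the skein-theoretic product-to-sum formula at $A=\exp(2\pi i/(4r+2))$. Once the Section~2 diagram is available the remainder is formal (and one may verify along the way that $I'_r$ also respects the natural $*$-operations, the operator adjoint and the mirror-image involution, since both fix the curve generators).
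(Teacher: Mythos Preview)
Your approach is correct and genuinely different from the paper's. The paper does not use the module isomorphism $I'_r\colon V'_r(\Sigma)\to\mathcal{H}^{alt}_{r+\frac12}(j,\delta)$ and conjugation at all; instead it introduces the noncommutative quantum torus $\mathcal{QT}^{(r)}=\mathbb{C}[A^{\pm1}]\langle m^{\pm1},l^{\pm1}\mid lm=A^2ml\rangle$ with basis $e_{a,b}=A^{ab}m^al^{-b}$, takes its $\sigma$-invariant part $\mathcal{QT}^{(r)}_\sigma$ spanned by $e_{a,b}+e_{-a,-b}$, cites Frohman--Gelca for $\mathcal{QT}^{(r)}_\sigma\cong\mathcal{S}^{(r)}_{curv}$, and then computes directly from the definition of $T^*_x$ that
\[
T^*_{\frac{\lambda_{c,d}}{2r+1}}\circ T^*_{\frac{\lambda_{a,b}}{2r+1}}=A^{ad-bc}\,T^*_{\frac{\lambda_{a+c,b+d}}{2r+1}},
\]
whence $T_{r+\frac12}(\lambda_{a,b})\ast T_{r+\frac12}(\lambda_{c,d})$ obeys exactly the same product-to-sum relation as $(e_{a,b}+e_{-a,-b})\ast(e_{c,d}+e_{-c,-d})$, so $T_{r+\frac12}(\lambda_{a,b})\mapsto -(e_{a,b}+e_{-a,-b})$ is a $\ast$-algebra isomorphism $\mathcal{T}^{(r)}_{curv}\to\mathcal{QT}^{(r)}_\sigma$.

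Your argument is more conceptual: once the intertwining diagram of Section~2 is in hand, conjugation by $I'_r$ transports everything for free, and you correctly isolate the only nontrivial content as the convention-matching between the stacking product and opposite composition. The paper's route is more explicit and self-contained---it actually \emph{performs} that convention check by computing the $A^{ad-bc}$ phase on the nose, which is precisely the ``product-to-sum expansion of $T_{r+\frac12}(\gamma_1)\circ T_{r+\frac12}(\gamma_2)$ computed from the $T^*_x$'s'' that you flag as the thing to verify. In effect the paper supplies the computation you defer, while you supply the structural reason the computation had to succeed. One small point: you read $\mathcal{S}^{(r)}_{curv}$ as its image in $\operatorname{End}(V'_r(\Sigma))$, whereas the paper treats it as the skein module $K_A(T^2\times I)$ itself; for your conjugation argument to give an isomorphism (rather than just a surjection onto the operator image) you are implicitly using faithfulness of the $K_A(T^2\times I)$-action on $V'_r(\Sigma)$, which deserves a word of justification or a citation.
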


\begin{proof}
 Denote by $\mathcal{QT}^{(r)}$ the quantum torus with deformation parameter $A=exp(\frac{2\pi i}{4r+2})$, that is,
$$
\mathcal{QT}^{(r)}=\mathbb{C}[A^{\pm 1}]<m^{\pm 1},l^{\pm 1}|lm=A^2ml>.
$$

Consider the basis over $\mathbb{C}$ of $\mathcal{QT}^{(r)}$ given by the vector $e_{a,b}=A^{ab}m^a l^{-b}$. Define the multiplication $\ast$ by
\begin{equation}
e_{a,b}\ast e_{c,d}=A^{ad-bc}e_{a+b,c+d}.
\end{equation}
 Thus $\mathcal{QT}^{(r)}$ is a noncommutative algebra. Consider the algebra morphism
 $$
\sigma:\mathcal{QT}^{(r)}\rightarrow\mathcal{QT}^{(r)},\sigma(e_{a,b})=e_{-a,-b},
 $$
 and let $\mathcal{QT}^{(r)}_{\sigma}$ be its invariant part, which is spanned by $e_{a,b}+e_{-a,-b}$. It has been shown in \cite{Frohman1998SkeinMA} that there exsit an isomorphism of $\ast$-algebras between $\mathcal{QT}^{(r)}_{\sigma}$ and 
$\mathcal{S}_{curv}^{(r)}$. We show that $\mathcal{T}_{curv}^{(r)}$ is isomorphic to $\mathcal{QT}^{(r)}_{\sigma}$ as $\ast$-algebras.

 Let $\lambda_{a,b}=a\mu+b\lambda$, $a,b\in\mathbb{Z}$. For any $\Phi\in\mathcal{H}^{alt}_{r+\frac{1}{2}}(j, \delta)$, we compute that

 \begin{equation}
 \begin{split}
      T^{*}_{\frac{\lambda_{a,b}}{2r+1}}\Phi(p,q)&=exp(\frac{i}{4}\omega(a\mu+b\lambda,p+q\tau))\Phi(p+\frac{a}{2r+1},q+\frac{b}{2r+1})\\
      &=exp(i\pi(aq-bp))\Phi(p+\frac{a}{2r+1},q+\frac{b}{2r+1}),
 \end{split}
 \end{equation}

 \begin{equation}
 \begin{split}
      T^{*}_{\frac{\lambda_{c,d}}{2r+1}}\circ T^{*}_{\frac{\lambda_{a,b}}{2r+1}}\Phi(p,q)&=exp(i\pi(a(q+\frac{d}{2r+1})-b(p+\frac{c}{2r+1}))\\
      &\times exp(\frac{i}{4}\omega(c\mu+d\lambda,p+q\tau))\Phi(p+\frac{a+c}{2r+1},q+\frac{b+d}{2r+1})\\
      &=exp(\frac{\pi i(ad-bc)}{2r+1})exp(\pi i((a+c)q-(b+d)p))\Phi(p+\frac{a+c}{2r+1},q+\frac{b+d}{2r+1})\\
      &=A^{ad-bc} T^{*}_{\frac{\lambda_{a+c,b+d}}{2r+1}}\Phi(p,q).
 \end{split}
 \end{equation}

 It follows that 
 \begin{equation}\label{22}
 \begin{split}
T_{r+\frac{1}{2}}(\lambda_{a,b})\ast T_{r+\frac{1}{2}}(\lambda_{c,d})&=( T^{*}_{\frac{\lambda_{c,d}}{2r+1}}+ T^{*}_{\frac{-\lambda_{c,d}}{2r+1}})\circ( T^{*}_{\frac{\lambda_{a,b}}{2r+1}}+ T^{*}_{\frac{-\lambda_{a,b}}{2r+1}})\\
&=A^{ad-bc} (T^{*}_{\frac{\lambda_{a+c,b+d}}{2r+1}}+T^{*}_{\frac{-\lambda_{a+c,b+d}}{2r+1}})\\
&+A^{bc-ad} (T^{*}_{\frac{\lambda_{a-c,b-d}}{2r+1}}+T^{*}_{\frac{-\lambda_{a-c,b-d}}{2r+1}})\\
&=-A^{ad-bc}T_{r+\frac{1}{2}}(\lambda_{a+c,b+d})-A^{bc-ad}T_{r+\frac{1}{2}}(\lambda_{a-c,b-d}).
 \end{split}
 \end{equation}

 Also we have 
 \begin{equation}\label{23}
 (e_{a,b}+e_{-a,-b})\ast(e_{c,d}+e_{-c,-d})=A^{ad-bc}e_{a+c,b+d}+A^{bc-ad}e_{a-c,b-d}.    
 \end{equation}

 From \hyperref[22]{(22)} and \hyperref[23]{(23)} we conclude that there exists an isomorphism of $\ast$-algebras from $\mathcal{T}_{curv}^{(r)}$ to $\mathcal{QT}^{(r)}_{\sigma}$ such that the image of $T_{r+\frac{1}{2}}(\lambda_{a,b})$ is $-(e_{a,b}+e_{-a,-b})$.
\end{proof}

\begin{cor}\label{c4.2}
The space of $SO(3)$-Berezin-Toeplitz curve operators $\mathcal{T}_{curv}^{(r)}$ is a finitely generated $\ast$-algebra. 
\end{cor}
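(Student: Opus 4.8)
The plan is to deduce the statement from the $\ast$-algebra isomorphism of Theorem \ref{t4.1}. Since $I'_r\colon\mathcal{T}_{curv}^{(r)}\to\mathcal{S}_{curv}^{(r)}$ identifies $\mathcal{T}_{curv}^{(r)}$ with the Kauffman bracket skein algebra of $T^2\times I$ at $A=\exp(\tfrac{2\pi i}{4r+2})$ (equivalently, by the proof of that theorem, with the invariant quantum torus $\mathcal{QT}^{(r)}_{\sigma}$), it suffices to exhibit a finite generating set for any one of these three isomorphic algebras. I would work inside $\mathcal{QT}^{(r)}_{\sigma}$, using the spanning set $(a,b)_T:=e_{a,b}+e_{-a,-b}$ (so $(a,b)_T=(-a,-b)_T$) and the product-to-sum rule
$$(a,b)_T\ast(c,d)_T=A^{ad-bc}\,(a+c,b+d)_T+A^{bc-ad}\,(a-c,b-d)_T,$$
which is just formula \eqref{23} rewritten; transported through Theorem \ref{t4.1} it becomes formula \eqref{22} under $T_{r+\frac12}(\lambda_{a,b})\mapsto-(a,b)_T$.

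The concrete claim to prove is that $\mathcal{QT}^{(r)}_{\sigma}$ is generated, as an algebra under $\ast$, by the three elements $(1,0)_T$, $(0,1)_T$ and $(1,1)_T$. First, when the two slopes are parallel the twist exponent $ad-bc$ vanishes, so the rule collapses to a Chebyshev recursion $(1,0)_T\ast(n,0)_T=(n+1,0)_T+(n-1,0)_T$, and similarly in the directions $(0,1)$ and $(1,1)$; hence every $(n,0)_T$, $(0,n)_T$, $(n,n)_T$ --- and more generally every $(kp,kq)_T$ --- is a polynomial in $(1,0)_T$, $(0,1)_T$, $(1,1)_T$, respectively in $(p,q)_T$. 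Then I would run a Euclidean-algorithm style induction on the height $|p|+|q|$ of a primitive class: multiplying a class of smaller height by one of the three generators and applying the rule, e.g.\ $(1,0)_T\ast(p-1,q)_T=A^{q}(p,q)_T+A^{-q}(p-2,q)_T$, expresses $(p,q)_T$ in terms of classes strictly closer to the origin (after dividing by the unit $A^{q}$), the few low cases such as $(1,q)_T$ being reached from $(1,1)_T$ and $(0,q-1)_T$. This is exactly the classical fact that the skein algebra of the torus is generated by the three simple closed curves of slopes $0$, $\infty$ and $1$ (Bullock--Przytycki; Frohman--Gelca \cite{Frohman1998SkeinMA}), so I would either invoke it or reproduce this short induction. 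Pulling the generators back through $I'_r$ and Theorem \ref{t4.1}, the operators $T_{r+\frac12}(\mu)$, $T_{r+\frac12}(\lambda)$ and $T_{r+\frac12}(\mu+\lambda)$ generate $\mathcal{T}_{curv}^{(r)}$.

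The argument is essentially formal once Theorem \ref{t4.1} is in hand, so there is no serious obstacle; the only point requiring care is the bookkeeping in the inductive step --- one must organize the reduction so that products with the three fixed generators really do decrease the height and never manufacture a lattice class further from the origin. Since the product-to-sum formula is bilinear and every twist factor $A^{\pm(ad-bc)}$ is a unit in $\mathbb{C}$, these twists never obstruct the reduction, and the induction closes. Thus $\mathcal{T}_{curv}^{(r)}$ is generated by three curve operators, which proves the corollary.
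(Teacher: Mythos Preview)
Your proposal is correct and follows essentially the same approach as the paper: the paper's proof simply invokes the isomorphism of Theorem~\ref{t4.1} together with the result of Bullock--Przytycki \cite{bullock1999multiplicative} that $\mathcal{S}_{curv}^{(r)}$ is generated by $Z_r'(\lambda_{1,0})$, $Z_r'(\lambda_{0,1})$ and $Z_r'(\lambda_{1,1})$, which is exactly the finite-generation fact you identify (and sketch an independent proof of via the product-to-sum formula). Your additional outline of the Euclidean-style induction is more than the paper provides, but the strategy is identical.
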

\begin{proof}
It has been shown in \cite{bullock1999multiplicative} that $\mathcal{S}_{curv}^{(r)}$ is generated over $\mathbb{C}[A^{\pm 1}]$ by $Z_r^{'}(\lambda_{1,0})$, $Z_r^{'}(\lambda_{0,1})$ and $Z_r^{'}(\lambda_{1,1})$. 
\end{proof}

Let $Y$ be a manifold whose fundamental group is finitely generated. Denote by $\chi(Y)$ and $\mathcal{M}(Y)$ its $SL(2,\mathbb{C})$-character variety and $SU(2)$-character variety respectively. Every pair of generators $\mu,\lambda$ of $\pi_1(T^2)$ will define an isomorphism between $\chi(T^2)$ and $(\mathbb{C}^{*})^2/\sigma$, where $(\mathbb{C}^{*})^2$ is the set of non-zero complex pairs $(m,l)$ and $\sigma$ is the involution $\sigma(m,l)=(m^{-1},l^{-1})$. Given a knot $K$ in $S^3$, let $X$ be its complement. An orientation of $K$ will define a unique pair of an oriented meridian and an oriented longitude such that the linking number between the longitude and the knot is zero. The pair provides an identification of $\chi(\partial X)$ and $(\mathbb{C}^{*})^2/\sigma$ which does not depend on the orientation of $K$.

The inclusion map $T^2=\partial X\hookrightarrow X$ induces a map $\theta:\chi(X)\rightarrow \chi(\partial X)$. The Zariski closure of the lift of $\theta(\chi(X))$ under the projection $(\mathbb{C}^{*})^2\rightarrow (\mathbb{C}^{*})^2/\sigma$ is an algeraic set in $\mathbb{C}^2$ consisting of components of dimension 0 or 1. The union of all the one-dimension components is defined by a polynomial $A_K(m,l)\in\mathbb{Z}[m,l]$ whose coefficients are co-prime, which is by definition the \textit{A-polynomial} of $K$. 

Note that $\mathcal{M}(\partial X)$ can be embedded into $\chi(\partial X)$ by sending $(p,q)$ to $(e^{-2\pi i q},e^{-2\pi i p})$. Thus $A(p,q):=-\left(A_K(e^{-2\pi i q},e^{-2\pi i p})+A_K(e^{2\pi i q},e^{2\pi i p})\right)$ can be viewed as a smooth function in $\mathcal{C}^{\infty
}(\mathcal{M}(\partial X))$. We want to show that the $SO(3)$-Berezin-Toeplitz operator $T_{r+\frac{1}{2}}(A)$ annihilates the corresponding $SO(3)$-knot state $Z'_r(S^3\backslash K)$, which can be viewed as a geometric version of the AJ conjecture.

The restriction map $\theta:\chi(X)\rightarrow \chi(\partial X)$ induces a map $\hat{\theta}:\mathbb{C}[\chi(\partial X)]\rightarrow \mathbb{C}[\chi(X)]$ between rings of regular functions. Denote by $\mathcal{I}(K)$ the kernel of $\hat{r}$. Since the ring $\mathbb{C}[\chi(\partial X)]$ can be embedded naturally into the principal ideal domain $\mathbb{C}(m)[l^{\pm{1}}]$, we can consider the ideal extension of $\mathcal{I}(K)$ which is generated by a single polynomial that is $m$-equivalent with the A-polynomial. 

In the quantum case, there is a map $\hat{\Theta}_r:\mathcal{S}_{curv}^{(r)}\rightarrow K_{e^{\frac{\pi i}{2r+1}}}(X)$ obtained by gluing the cylinder over a torus into $X$ at the $T^2\times \{0\}$ so that the meridian goes to the meridian and the longitude goes to the longitude. Denote by $\mathcal{I}_r(K)$ the kernel of $\hat{\Theta}_r$. We need the following theorem proved in \cite{Frohman1998TheAF}.

\begin{thm}\label{t4.3}
$\lim\limits_{r \rightarrow+\infty}\mathcal{I}_r(K)=\mathcal{I}(K)$; Furthermore, let $\mathcal{F}_r(K)$ be the annihilator of $Z'_r(S^3\backslash K)$ in $\mathcal{S}_{curv}^{(r)}$. Then the ideal $\mathcal{I}_r(K)$ lies in $\mathcal{F}_r(K)$.
\end{thm}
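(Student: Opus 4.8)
The plan is to prove the two assertions separately, since their proofs have essentially nothing in common: the inclusion $\mathcal{I}_r(K)\subseteq\mathcal{F}_r(K)$ is a formal consequence of the gluing description of the knot state, whereas the semiclassical limit $\lim_{r\to\infty}\mathcal{I}_r(K)=\mathcal{I}(K)$ is the $SO(3)$-analogue of the noncommutative A-ideal theorem of \cite{Frohman1998TheAF} and carries all the real content.

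I would dispatch the inclusion first, as it is short. By the construction of the $SO(3)$-knot state in \cite{Fang2022SO3KnotSA}, writing $X=S^3\setminus K$ and $A=e^{\pi i/(2r+1)}$, one has $Z'_r(S^3\setminus K)=\Xi_r(\varnothing)$, where $\Xi_r\colon K_A(X)\to V'_r(\partial X)$ is the universal map pushing skeins in $X$ out to the boundary and $\varnothing$ is the empty skein. The algebra $\mathcal{S}^{(r)}_{curv}=K_A(\partial X\times I)$ acts on $K_A(X)$ through $\hat\Theta_r$ via the collar identification $(\partial X\times I)\cup_{\partial X}X\cong X$, so that $\hat\Theta_r(\alpha)=\alpha\cdot\varnothing$, and it acts on $V'_r(\partial X)$ by the same operation of gluing a cylinder onto the boundary; since it is literally the same cylinder being glued, $\Xi_r$ intertwines the two actions, $\Xi_r(\alpha\cdot s)=\alpha\cdot\Xi_r(s)$. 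Hence for $\alpha\in\mathcal{I}_r(K)=\ker\hat\Theta_r$,
\begin{equation*}
\alpha\cdot Z'_r(S^3\setminus K)=\alpha\cdot\Xi_r(\varnothing)=\Xi_r(\alpha\cdot\varnothing)=\Xi_r\bigl(\hat\Theta_r(\alpha)\bigr)=\Xi_r(0)=0,
\end{equation*}
which is exactly $\mathcal{I}_r(K)\subseteq\mathcal{F}_r(K)$.

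Next I would transport everything into the quantum torus and let $A\to1$. By Theorem~\ref{t4.1} and the Frohman--Gelca isomorphism \cite{Frohman1998SkeinMA}, $\mathcal{S}^{(r)}_{curv}$ is $\mathcal{QT}^{(r)}_\sigma$, whose structure constants lie in $\mathbb{Z}[A^{\pm1}]$ and whose $A=1$ specialization is $\mathbb{C}[\chi(\partial X)]=\mathbb{C}[m^{\pm1},l^{\pm1}]^\sigma$; in the same monomial basis the matrix of $\hat\Theta_r$ has entries that are Laurent polynomials in $A$ converging, as $A=e^{\pi i/(2r+1)}\to1$, to the matrix of the classical restriction map $\hat\theta$, where on the target one invokes the $SO(3)$-form of Bullock's theorem recorded in \cite{Fang2022SO3KnotSA}, identifying the $A\to1$ limit of $K_A(X)$ modulo nilpotents with $\mathbb{C}[\chi(X)]$. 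Equipping both algebras with the common total-degree filtration $F_\bullet$ and arguing degree by degree, each $\mathcal{I}_r(K)\cap F_n$ is a point of a fixed finite-dimensional Grassmannian, so subsequential limits exist; applying $\hat\Theta_r$ and using the entrywise convergence $\hat\Theta_r\to\hat\theta$ forces every such limit into $\mathcal{I}(K)\cap F_n$. This yields $\lim_r\mathcal{I}_r(K)\subseteq\mathcal{I}(K)$.

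The reverse inclusion $\mathcal{I}(K)\subseteq\lim_r\mathcal{I}_r(K)$ is the crux, and the step I expect to be the real obstacle. Lower semicontinuity of rank shows only that the quantum kernels can be \emph{smaller} than the classical one, so I would need to rule out that $\hat\Theta_r$ acquires extra rank at the particular roots of unity $A=e^{\pi i/(2r+1)}$ --- precisely the regime in which root-of-unity effects (the enlarged center of $\mathcal{QT}^{(r)}$, failure of semisimplicity) might collapse part of the kernel. Following \cite{Frohman1998TheAF}, the remedy is an explicit lift: starting from a Gr\"obner-type generating set of $\mathcal{I}(K)$ in the principal ideal domain $\mathbb{C}(m)[l^{\pm1}]$, whose generator is $m$-equivalent to the A-polynomial as recalled above, one reconstructs each generator by $q$-commutator identities inside $\mathcal{QT}^{(r)}$ that depend continuously on $A$ and specialize correctly at $A=1$, thereby producing an element of $\mathcal{I}_r(K)$ converging to it for all large $r$. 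Verifying that this lifting mechanism survives the passage to the $SO(3)$ setting --- the $\mathbb{Z}_2$-invariant skein algebra at level $r+\frac{1}{2}$ with $q=A^2=e^{2\pi i/(2r+1)}$, paired with the $SO(3)$ skein modules of the knot complement at those roots of unity --- is the genuine work; everything else is bookkeeping. The two inclusions together give $\lim_r\mathcal{I}_r(K)=\mathcal{I}(K)$, which with the annihilation statement completes the proof.
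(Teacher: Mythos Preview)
The paper does not supply its own proof of this theorem: the statement is introduced with ``We need the following theorem proved in \cite{Frohman1998TheAF}'' and is quoted as a black box from that reference. There is therefore no paper-side argument against which to compare your proposal.

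That said, your sketch is broadly on target and goes well beyond what the paper does. The inclusion $\mathcal{I}_r(K)\subseteq\mathcal{F}_r(K)$ is indeed a formal consequence of the $K_A(\partial X\times I)$-equivariance of the map $K_A(X)\to V'_r(\partial X)$, and your one-line computation is correct. For the limit assertion you rightly identify it as the $SO(3)$ transcription of the Frohman--Gelca--Lofaro noncommutative $A$-ideal result, and you accurately isolate the real obstacle: the cited reference works with a generic (or formal) deformation parameter and then specializes to $A=-1$, whereas here one is sitting at the particular roots of unity $A=e^{\pi i/(2r+1)}$ and sending $r\to\infty$, and kernels are only upper-semicontinuous under specialization, so the generic argument does not transfer automatically. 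Your Grassmannian compactness argument for the inclusion $\lim_r\mathcal{I}_r(K)\subseteq\mathcal{I}(K)$ and your lifting strategy for the reverse inclusion match the architecture of \cite{Frohman1998TheAF}; the candid caveat you insert --- that the lift must be verified in the $\mathbb{Z}_2$-invariant, odd-root-of-unity setting --- is precisely the gap that neither you nor the present paper closes.
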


Now Suppose the ideal extension of $\mathcal{I}_r(K)$ is generated by a single polynomial $\alpha_K^{(r)}(m,l,q=e^{\frac{2\pi i}{2r+1}})$ such that 
$$
\lim\limits_{r \rightarrow+\infty}\alpha_K^{(r)}(m,l,q=e^{\frac{2\pi i}{2r+1}})=-(A_K(m,l)+A_K(m^{-1},l^{-1})).
$$ 

By Theorem\hyperref[t4.1]{(4.1)} we obtain that $\left((I^{'}_r)^{-1}(\alpha_K^{(r)})\right)_r$ is a $SO(3)$-Berezin-Toeplitz curve operator. Suppose $(I^{'}_r)^{-1}(\alpha_K^{(r)})=T_{r+\frac{1}{2}}(F_{\gamma})$, where $F_{\gamma}\in\mathbb{Z}[e^{2\pi i(aq+bp)}]$ . From some symbol calculation one could show that 
\begin{equation}
\lim\limits_{r\rightarrow\infty}\|T_{r+\frac{1}{2}}(A-F_{\gamma})\|=0.    
\end{equation}

By Theorem\hyperref[t3.1]{(3.1)} we have that $A=F_{\gamma}$. By Theorem\hyperref[t4.3]{(4.3)} it follows that $T_{r+\frac{1}{2}}(A)$ should lie in the annihilator of $Z'_r(S^3\backslash K)$.
\begin{conj}\label{c4.4}
 The $SO(3)$-Berezin-Toeplitz operator associated with $A(p,q)$ annihilates the corresponding $SO(3)$-knot state, that is 
 \begin{equation}
     T_{r+\frac{1}{2}}(A)Z'_r(S^3\backslash K)=0,r=1,2,\cdots.
 \end{equation}
\end{conj}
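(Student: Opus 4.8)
The plan is to exhibit $T_{r+\frac{1}{2}}(A)$ — or at least an operator with the same principal symbol — as the image under $I'_r$ of an element of the recursion ideal $\mathcal{I}_r(K)$, and then to conclude from Theorem~\hyperref[t4.3]{(4.3)}. The first step is organizational: the space isomorphism $V'_r(\Sigma)\to\mathcal{H}^{alt}_{r+\frac{1}{2}}(j,\delta)$ of Section~2 conjugates the skein action into the curve-operator action, so the algebra isomorphism of Theorem~\hyperref[t4.1]{(4.1)} carries the annihilator $\mathcal{F}_r(K)\subset\mathcal{S}^{(r)}_{curv}$ of the $SO(3)$-knot state onto the set of curve operators in $\mathcal{T}^{(r)}_{curv}$ that kill $Z'_r(S^3\backslash K)$. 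Since Theorem~\hyperref[t4.3]{(4.3)} gives $\mathcal{I}_r(K)\subset\mathcal{F}_r(K)$, it is enough to prove that $T_{r+\frac{1}{2}}(A)$ lies in $(I'_r)^{-1}\bigl(\mathcal{I}_r(K)\bigr)$.

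Toward this, fix a generator $\alpha_K^{(r)}$ of the extension of $\mathcal{I}_r(K)$ to $\mathbb{C}(m)[l^{\pm 1}]$, clear denominators and $\sigma$-symmetrize to land in $\mathcal{S}^{(r)}_{curv}$, obtaining $P^{\mathrm{alg}}_r\in\mathcal{I}_r(K)$ with $\lim_{r\to\infty}P^{\mathrm{alg}}_r=-\bigl(A_K(m,l)+A_K(m^{-1},l^{-1})\bigr)$ coefficientwise, and set $P_r:=(I'_r)^{-1}(P^{\mathrm{alg}}_r)\in\mathcal{T}^{(r)}_{curv}$. By Theorem~\hyperref[t4.3]{(4.3)}, $P_r\,Z'_r(S^3\backslash K)=0$ for every $r$. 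Writing $P_r=\sum c^{(r)}_{a,b}\,T_{r+\frac{1}{2}}(\lambda_{a,b})$ through the isomorphism of Theorem~\hyperref[t4.1]{(4.1)}, the coefficients $c^{(r)}_{a,b}$ converge as $r\to\infty$ to those expressing $A(p,q)$ in the functions $-2\cos\bigl(\tfrac12\omega(\lambda_{a,b},\cdot)\bigr)$; feeding this into the principal-symbol computation of the main theorem of Section~3 shows that $P_r$ is a Berezin-Toeplitz operator whose principal symbol $F_r$ converges to $A(p,q)$ in $\mathcal{C}^{\infty}(\mathcal{M}(\partial X))$. Comparing $P_r$ with $T_{r+\frac{1}{2}}(A)=\Pi_{r+\frac{1}{2}}M_A$: both are Berezin-Toeplitz, their principal symbols differ by $A-F_r\to 0$, and Theorem~\hyperref[t3.1]{(3.1)} together with the $O(r^{-1})$ bound on the subprincipal terms gives $\lim_{r\to\infty}\bigl\|T_{r+\frac{1}{2}}(A)-P_r\bigr\|=0$, hence $\bigl\|T_{r+\frac{1}{2}}(A)\,Z'_r(S^3\backslash K)\bigr\|\le\bigl\|T_{r+\frac{1}{2}}(A)-P_r\bigr\|\cdot\bigl\|Z'_r(S^3\backslash K)\bigr\|$.

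The main obstacle is that this estimate does not by itself yield the exact vanishing asserted in the conjecture, and in general cannot: Conjecture~\hyperref[vc]{(1.1)} predicts that $\|Z'_r(S^3\backslash K)\|$ grows exponentially in $r$ for hyperbolic $K$, whereas $\|T_{r+\frac{1}{2}}(A)-P_r\|$ decays only at the polynomial rate set by the $q$-corrections of the quantum A-polynomial, so the product need not tend to $0$; moreover refining $P_r$ to agree with $T_{r+\frac{1}{2}}(A)$ to higher order in $r^{-1}$ does not help, since even an $O(r^{-\infty})$ gap is overwhelmed by exponential growth. What is needed is an \emph{exact} rather than an asymptotic input: one must choose the subleading part of the quantization so that $T_{r+\frac{1}{2}}(A)$ itself — not merely its principal symbol — is the $I'_r$-image of an element of $\mathcal{I}_r(K)$; equivalently, one must show that the Weyl expansion of $\Pi_{r+\frac{1}{2}}M_A$ in the curve operators $T_{r+\frac{1}{2}}(\lambda_{a,b})$ is divisible, inside $\mathcal{S}^{(r)}_{curv}$, by the level-$r$ quantum A-polynomial. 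Reconciling the analytic normalization inherent in Berezin-Toeplitz quantization with the algebraic normalization of the quantum A-polynomial at each finite level $r$ is, I expect, the heart of the matter, and is the reason the statement is recorded here as a conjecture; the Mahler-measure and geometric-Langlands considerations of Section~5 are intended as one route toward it, to be combined with a direct analysis of the torsion of $\mathcal{I}_r(K)$.
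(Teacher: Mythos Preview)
Your proposal is essentially the same heuristic the paper gives in the paragraph immediately preceding Conjecture~\hyperref[c4.4]{(4.4)}, and you correctly recognize that it does not constitute a proof; the paper does not prove this statement either and records it explicitly as a conjecture.

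One structural difference is worth noting. The paper makes the additional (unjustified) supposition that $(I'_r)^{-1}(\alpha_K^{(r)})$ coincides \emph{exactly} with a Berezin--Toeplitz operator $T_{r+\frac{1}{2}}(F_\gamma)$ for a single $r$-independent trigonometric polynomial $F_\gamma\in\mathbb{Z}[e^{2\pi i(aq+bp)}]$, and then invokes Theorem~\hyperref[t3.1]{(3.1)} to deduce $F_\gamma=A$ on the nose, so that $T_{r+\frac{1}{2}}(A)$ would literally lie in the annihilator. You instead allow the pulled-back operator $P_r$ to carry $r$-dependent coefficients, which is the honest situation, and correctly observe that one then obtains only $\|T_{r+\frac{1}{2}}(A)-P_r\|\to 0$. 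Your diagnosis of why this fails to give exact vanishing --- the conjectured exponential growth of $\|Z'_r(S^3\backslash K)\|$ against merely polynomial decay of the operator difference --- is sharper than anything in the paper and isolates the analytic content of the conjecture. The paper's extra hypothesis, that the family of skein generators pulls back to $\Pi_{r+\frac{1}{2}}M_{F_\gamma}$ for a fixed $F_\gamma$, is precisely the ``exact rather than asymptotic input'' you identify as missing; neither the paper nor your proposal supplies it.
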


\section{Discussion}
Drawing inspiration from the geometric volume conjecture\hyperref[vc]{(1.1)}, the geometric AJ conjecture\hyperref[c4.4]{(4.4)}, and the holographic complexity=volume conjecture\cite{brown2016complexity}, it is anticipated that the quantum complexity associated with the $SO(3)$-Berezin-Toeplitz curve operator $T_{r+\frac{1}{2}}(A)$ captures relevant information pertaining to the $L^2$-norm of the $SO(3)$-knot state $Z'_r(S^3\backslash K)$ and the simplicial volume of $S^3\backslash K$. To investigate quantum complexity on the space $\mathcal{T}_{curv}^{(r)}$, one could employ the circuit complexity approach, as demonstrated in Corollary\hyperref[c4.2]{(4.2)}, or alternatively, a complexity metric could be introduced on $\mathcal{T}_{BT}^{(r)}$ using Nielsen's method.

Additionally, there exists a quantity that relates A-polynomials with hyperbolic volumes of knot complements. This quantity is known as the Mahler measure of an A-polynomial, denoted by $m(A_K)\in\mathbb{C}$, which has connections to L-functions and dilogarithm functions. It is conjectured that the Mahler measure is essentially equal to the volume of $S^3\backslash K$. We anticipate that the quantum counterpart of the Mahler measure, denoted by $\hat{m}$, is a type of quantum complexity that has the property that the semiclassical limit of $\hat{m}(T_{r+\frac{1}{2}}(A))$ equals $m(A)$. Moreover, the quantum modular property of the colored Jones polynomial suggests that one can link the Mahler measure with volume through the geometric Langlands theory. Further details will be discussed in upcoming work.

\begin{table}[!h]
        \centering
\resizebox{.8\columnwidth}{!}{        
\begin{tabular}{|p{0.25\textwidth}|p{0.25\textwidth}|p{0.25\textwidth}|p{0.25\textwidth}|}
\hline 
  & \begin{center}
{\Large Quantum}
\end{center}
 & \begin{center}
{\Large Galois}
\end{center}
 & \begin{center}
{\Large Automorphic}
\end{center}
 \\
\hline 
 \begin{center}
$\displaystyle T^{2}$
\end{center}
 & \begin{center}
$\displaystyle \mathcal{H}_{r+1/2}^{alt}( j,\delta )$
\end{center}
 & \begin{center}
$\displaystyle \chi \left( T^{2}\right)$
\end{center}
 & \begin{center}
$\displaystyle Bun_{^{L} G}\left( T^{2}\right)$
\end{center}
 \\
\hline 
 \begin{center}
$\displaystyle S^{3} \backslash K$
\end{center}
 & \begin{center}
$\displaystyle Z_{r}^{'}\left( S^{3} \backslash K\right)$
\end{center}
 & \begin{center}
$\displaystyle \chi \left( S^{3} \backslash K\right)$
\end{center}
 & \begin{center}
$\displaystyle Bun_{^{L} G}\left( S^{3} \backslash K\right)$
\end{center}
 \\
\hline 
 \begin{center}
Complexity
\end{center}
 & \begin{center}
$\displaystyle \frac{log\| Z_{r}^{'}\left( S^{3} \backslash K\right) \| }{r}$
\end{center}
 & \begin{center}
$\displaystyle m( A_{K})$ 
\end{center}
 & \begin{center}
$\displaystyle \frac{1}{\pi } vol\left( S^{3} \backslash K\right)$
\end{center}
 \\
 \hline
\end{tabular}}
        
        \end{table}

\printbibliography

\end{document}